\DeclareSymbolFont{calletters}{OMS}{cmsy}{m}{n}
\DeclareSymbolFontAlphabet{\mathcal}{calletters}
\newlength{\oldparindent}
\newtheorem{Theorem}{Theorem}[part]
\newtheorem{Definition}{Definition}[part]
\newtheorem{Proposition}{Proposition}[part]
\newtheorem{Assumption}{Assumption}[part]
\newtheorem{Lemma}{Lemma}[part]
\newtheorem{Remark}{Remark}[part]
\newtheorem{Example}{Example}[part]
\newcommand{\nc}{\newcommand}
\nc{\esssup}{\mathop{\mathrm{ess\,sup}}}
\nc{\essinf}{\mathop{\mathrm{ess\,inf}}}
\nc{\argmax}{\mathop{\mathrm{arg\,max}}}
\def \P{\mathbb{P}}
\def \N{\mathbb{N}}
\def \R{\mathbb{R}}
\def \E{\mathbb{E}}
\def \Q{\mathbb{Q}}
\def \H{\mathbb{H}}
\def \1{\mathds{1}}
\def \l({{\left (}}
\def \r){{\right )}}
\def \l[{{\left [}}
\def \r]({{\right ]}}
\newcommand{\MBFigure}[6]{
$\left. \right.$ \\
\refstepcounter{figure}
\addcontentsline{lof}{figure}{\numberline{\thefigure}{\ignorespaces #5}}
\begin{center}
\begin{minipage}{#1cm}
\centerline{\includegraphics[width=#2cm,angle=#3]{#4}}
\begin{center}
\upshape{F\textsc{ig} \normal
\end{center}
size{\thefigure}. $-$} #5
\end{center}
\label{#6}
\end{minipage}
\end{center}
$\left. \right.$ \\}
\title{Exponential Quadratic BSDEs with infinite activity Jumps
}
 \author{Anis {\sc Matoussi} \footnote{Research supported by the Chair {\it Financial Risks} of the {\it Risk Foundation} sponsored by Soci\'et\'e G\'en\'erale, the Chair {\it Derivatives of the Future} sponsored by the {F\'ed\'eration Bancaire Fran\c{c}aise}, and the Chair {\it Finance and Sustainable Development} sponsored by EDF and Calyon.}\\
Le Mans University \\  Risk and Insurance Institute of Le Mans \\ Laboratoire  Manceau de Math\'ematiques \\ e-mail:  \textcolor[rgb]{0.00,0.07,1.00}{anis.matoussi@univ-lemans.fr.}
			\and 
Rym  {\sc Salhi}\\ Le Mans University\\  Risk and Insurance Institute of Le Mans \\Laboratoire  Manceau de Math\'ematiques\\ e-mail: \textcolor[rgb]{0.00,0.07,1.00}{rym.salhi@univ-lemans.fr}
}
\begin{document}
\maketitle
\begin{abstract}
In this paper, we study a Backward Stochastic Differential Equation with Jumps (BSDEJs in short) where the jumps have infinite activity. Following a forward approach based on Exponential Quadratic semimartingale, we prove the existence of solution of Quadratic BSDEJs with unbounded terminal condition and quadratic growth in $z$.
\end{abstract}
\textbf{Keywords}: Backward stochastic differential equation with jumps, exponential quadratic semimartingale,  infinite activity, forward approach.

\section{Introduction}
In the last two decades, Backward Stochastic Differential Equations (BSDEs in short) was the main tool to solve   stochastic control problems in financial mathematics, for instance, maximization utility problem, robust maximization problem and stochastic differential games. These equations were first introduced by Bismut in 1973 \cite{Bismut} in the context of stochastic optimal control problem. \\ In 1990, Pardoux and Peng \cite{PP90} proved the well posedness for bounded BSDEs with drivers that satisfy a general non-linear Lipschitz condition.\\
 In the special context of recursive utility, Duffie and Epstein  \cite{DE92} also introduced these equations. \\
\\
Since then BSDE's have been widely studied and a particular class of BSDE received a growing interest is the case when the driver has quadratic growth in $z$. The first paper where such BSDE appeared is due to Schroder and Skiadas \cite{Schroder} and followed by Kobylanski who treats the question of existence and uniqueness of BSDE's solution when the terminal condition is bounded. \\
To show that, the author use a monotone stability approach based on an exponential change of variable, truncature procedure and a comparison result. The major difficulty in the so-called Kobylanski method is the strong convergence of the martingale part.\\
Afterward, Tevzadze \cite{T08} provides a totally different approach to solves a locally Lipschitz-quadratic BSDE with bounded terminal condition based on a fixed point argument. The particularity of this methodology is that the strong convergence is no longer needed to get the existence of the solution. However, the argument of this method stands only for a small bounded terminal condition.  This work has been generalized by \cite{KTP} to the finite jump setting.\\
Recently, Barrieu and El karoui \cite{BElK11} proposed a different method to tackle the question of existence of the solution for unbounded quadratic BSDE. The approach is based essentially on the stability of a special class of semimartingale.\\
Many works have also been regarding to the boundedness of the terminal condition. In \cite{BH05},\cite{BH07}, Briand and Hu extended the result of Lepeltier and San Martin \cite{LepSan98} to the case of unbounded terminal condition and provide an existence result for quadratic BSDE.\\
\textbf{ Literature review on BSDEs with Jumps }
Backward stochastic differential equation with jumps (BSDEJs in short) was first introduced by Tang and Li \cite{TangandLi} as follows
\begin{equation}\label{JBSDEE}
 Y_{t}=\int_{t}^{T}f(s,Y_{s^{-}},Z_{s},U_{s})ds-\int_{t}^{T}Z_{s}dW_{s}-\int_{t}^{T}\int_{E}U_{s}(e)\tilde{\mu}(dt,de)\,\,\,\,\,\,\,\,\,\,\,\,\,\,\,t\in[0,T]
\end{equation}
Where $\tilde{\mu}=\mu(dt,de)-\nu(dt,de)$ is the compensated random measure of $\mu$.
Under a Lipschitz condition on the generator $f$, Barles, Buckdahn and Pardoux \cite{BPP97} investigate the well posedness of this equation, in order to give a probabilistic interpretation of viscosity solution of semilinear integral Partial equations.
According to the authors, a solution of BSDEJ associated to $(f,\xi)$ is a triple of progressively measurable processes $(Y,Z,U)$ such that it verifies the equation \eqref{JBSDEE}.\\
Later, Becherer studied separately in the finite and infinite activity setting \cite{B06, B14}  the existence and uniqueness of BSDEJ's solution when the driver is Lipschitz in $(y,z)$ and locally in $u$.  \\
Many attempts have been suggested to relax the assumptions on the driver $f$. However, only a few works studied the quadratic case in a general setting. In fact most of the works are arising from utility maximization problem and hence deals wih the wellposedness of BSDEJ with a specific form of $f$.\\ 
The only general result in this subject is the paper of El Karoui, Matoussi and Ngoupeyou \cite{ELKMN16}. The authors extend the approach of \cite{BElK11}  to the jumps setting. They characterize the Quadratic BSDEJs with a class of quadratic semimartingale called \emph{Exponential Quadratic semimartingale} and provide a stability result for this class of  semimartingale. To get the existence of quadratic BSDE's solution, they use a regularization procedure and the stability result of this semimartingale under minimal integrability assumptions.
Nonetheless, we have to point out that the extension of the forward approach to the jumps setting is not straightforward . The presence of the jumps induce many technical difficulties and requires some specific arguments.\\
In \cite{Fuji13}, Fujii and Takahashi proved the existence and uniqueness of the solution under the exponential quadratic structure of \cite{ELKMN16} and bounded terminal condition. Later, Laeven and Stadje \cite{Laeven} investigate the effect of ambiguity on a portfolio choice and indifference  valuation problem in term of solutions of bounded BSDEJ whose generator grows at most quadratically with infinite activity jumps.\\
 The reason why quadratic BSDE has attracted much attention is the range of applications notably mathematical finance. In \cite{rouge}, Rouge and  El Karoui solved an indifference pricing problem with exponential utility via a BSDE approach. Later, Imkeller and Muller \cite{HuImkellerMueller05} extended the result of Rouge and EL Karoui to the cases of power and logarithmic utility in which the set of strategies is a closed set and related it to a specific class of quadratic BSDE. In \cite{bordigoni}, Bordigoni, Matoussi, and Schweizer studied a robust indifference pricing problem in general setting using stochastic control technics. They have proved that the value function of the stochastic control problem is characterized by the solution of a general quadratic BSDE using the dynamic programming Bellman principle.  \\ 
 In the context of exponential utility maximization problem, Morlais \cite{M07, M10} have shown the existence and the uniqueness of the solution of bounded BSDE with jumps for a specific form of generator.\\
\,\,\textbf{ Main Contribution }
Our main interest is to study quadratic BSDE with infinite activity jumps and unbounded terminal condition when the driver satisfies the following structure condition.
\begin{equation}\label{BSDE}
\underline{q}(t,y,z,u)=-{1\over \delta}j_t(-\delta u)-{\delta\over 2}|z|^2-l_t-c_t|y|\le f(t,y,z,u)\le {1\over \delta}j_t(\delta u)+{\delta\over 2}|z|^2+l_t+c_t|y|=\bar q(t,y,z,u).\nonumber
\end{equation}\\
with $l$, $c$ a non negative processes and $\delta>0$ a constant where $$j_t(\delta u):=\int_{E}(e^{\delta u}-\delta u-1)\nu(de).$$
 Our point of view is inspired from \cite{ELKMN16} who studied the problem in the finite activity jumps setting. The forward point of view is based on characterizing the solution of the quadratic BSDEJ as a special quadratic exponential semimartingale. 
  Since we are dealing with a BSDEs with infinite activity, our approach is based on a truncation technique of the measure $\nu$ combinated with double approximation of the exponential quadratic generator using the inf and sup convolution. This regularization procedure transforms the original quadratic generator with infinite jumps into a sequence of coefficients with Lipschitz growth and finite activity jumps. Applying a general stability result based on an old theorem of Barlow and Protter \cite{BP90}, we get the existence of BSDE's solution with infinite activity jumps. \\
  
The paper is organized as follows. In section $2$ we give preliminaries including all notations and framework. In section $3$ we give the definition of the solution of Quadratic BSDEs with jumps, the main assumptions and we provide some technical lemma and finally we prove and existence result for unbounded solution of quadratic BSDEJs. Finally, we give some technical results needed for the existence of the solution of the BSDEJs.
\section{Preliminaries and main results}
\subsection{Notations and setting }
Recall that $T>0$ is a fixed time horizon. We consider a filtered probability space $(\Omega ,\mathcal{F},\mathbb{P})$ on which is defined a $d$-dimensional Brownian motion $ W=(W_{t})_{0\leq t\leq T}$ and an integer valued random measure $\mu$ with a compensator
that can be time-inhomogeneous  and may allow for infinite activity of the jumps. We assume that the filtration $\mathbb{F}=(\mathcal{F}_{t})_{0<t<T}$ satisfies the usual conditions of completeness and right continuity. Due to these usual conditions, we can take all semimartingales having right continuous paths with left limits.
Here $\mu$ is an integer valued random measure defined as
\begin{align}
\mu(\omega,dt,de): &(\Omega\times[0,T]\times\mathcal{E})\rightarrow(\mathcal{B}([0,T])\times \mathcal{E})  \nonumber \\
                  &(w,dt,de)\rightarrow \mu(\omega,dt,de)=\sum^{\Delta X_{s}\neq 0}_{s\in[0,t]}\delta_{ (s,\Delta X_{s}) }(dt,de).\nonumber
\end{align}
We will denote by $\nu$ the compensator of $\mu$ under the probability measure $\mathbb{P}$. For a $\sigma$-finite measure $\lambda$ and a bounded Radon-Nikodym derivative $\zeta$, we will assume that the compensator $\nu$ is absolutely continuous with respect to $\lambda\otimes dt$ such that for some constant $C_{\nu}\geq 0$, 
$$ \nu(dt,de)=\zeta(\omega,t,e)\lambda (de)dt.\hspace{1cm} 0 \leq \zeta(t,e)\leq C_{\nu} $$
and  satisfies the following integrability condition $\int_{E}(1\wedge |e|^{2})\lambda (de)<\infty $.\\
In this work, we specially pay attention to the case when the jumps have infinite activity meaning that $\lambda(E)=\infty$. Note that in \cite{ELKMN16} the case of finite activity is already  considered . \\ As stated in \cite{sato}, the infinite activity of the jumps is related to the behavior of the compensator $\nu$ near to $0$. Since we have always a finite number of big jumps, the (in)finitness of the  jumps is controlled by the number of small jumps and thus the behavior of the $\nu$ around the origin.

Let $f$ be a $\mathcal{P}\otimes\mathcal{E}$-measurable function. The integral with respect to the random measure and the compensator are defined as follow
$$(f.\mu)_{t}=\int_{0}^{t}\int_{E}f(s,e)\mu (ds,de)\,\,\,\,\, , (f.\mu)_{t}=\int_{0}^{t}\int_{E}f(s,e)\nu(ds,de).$$
The random measure $\tilde{\mu}$ is defined as the compensated measure of $\mu$ such that
$$ \tilde{\mu}(w,dt,de)=\mu(w,dt,de)-\nu(dt,de).$$
In particular, the stochastic integral $U.\tilde{\mu}=\int_{E} U_{s}(e)\tilde{\mu}(ds,de)$ is a local square integrable martingale, for any predictable locally integrable process $U$.
\\
We will assume the following weak representation property, for any local martingale $M$
\begin{equation}
    M=M_{0}+ \int_{0}^{.} Z_{s}.dW_{s}+\int_{0}^{.}\int_{E} U_{s}(e)\tilde{\mu}(de,ds).
\end{equation}
Now we introduce the following spaces of processes which will be often used in the sequel.\\
For any $p\geq  1$,
$\mathcal{P}$ stands for the $\sigma$-field of all predictable sets of $[0,T]\times\Omega$.\\
$\bullet$ ${\cal G}_{loc}(\mu)$ the set of $\cal P \otimes\mathcal{E}$-measurable $\R$-valued functions $H$  such that $$|H|^2.\nu_t <+\infty .$$\\
$\bullet$ $\mathbb{L}_{T}^{exp}$ the set of all $\mathcal{F}_{t}$-measurable random variables $Y$  such that $\forall \gamma>0$  $$\mathbb{E}[\exp(\gamma |Y|)]<+\infty .$$\\
$\bullet$ $\mathbb{H}^{2}([0,T])$ the set of all $\mathbb{R}$-valued c\`{a}dl\'{a}g and $\mathcal{F}_{t}$-progressively measurable  processes $Z$ such that
$$ \mathbb{E}[\int_{0}^{T}|Z_{s}|^{2}ds]<+\infty .$$\\
$\bullet$ $\mathcal{S}^{2}([0,T])$ is the space of $\mathbb{R}$ valued c\`{a}dl\'{a}g and $\mathcal{F}_{t}$-progressively measurable processes $Y$  such that $$\mathbb{E}\left[ \displaystyle{ \sup_{0\leq t\leq T}} |Y_{t}|^{2}\right]<+\infty .$$\\
$\bullet$ $\mathbb{H}^{2}_{\nu}([0,T])$ the set of all predictable processes $U$ such that $$\mathbb{E}\left[ \left(\int_{0}^{T}\int_{\mathbb{E}}|U_{s}(e)|^{2}\nu(de)dt\right) \right]<+\infty.$$
$\bullet$ $\L^{0}(B(E),\nu)$ is the set of all $B(E)$-measurable functions with the topology of convergence in measure with $ |u-u^{'}|_{t}= (\int_{E}|u(e)-u^{'}(e)|^{2}\nu (de))^{\frac{1}{2}}$.\\
\subsection{Quadratic Exponential BSDEs with jumps}
We are given the following objects: \\
$\bullet$ The terminal condition $\xi$ is an $\mathcal{F}_{T}$-measurable random variable.\\
$\bullet$ $W=(W_{t})_{t\leq T}$ be a $d$-dimensional Brownian motion.\\ 
$\bullet$ $\mu$ a random measure with compensator $\nu$ and $\tilde{\mu}(ds,de)=\mu(ds,de)-\nu(ds,de)$.\\
$\bullet$ The generator $f: \Omega\times[0,T]\times\mathbb{R}\times\mathbb{R}^{d}\times\mathbb{L}^{0}(\mathcal{B}(E),\lambda)\rightarrow \bar{\mathbb{R}}$ are always taken $\mathcal{P}\otimes\mathcal{B}(\mathbb{R}^{d+1})\otimes\mathcal{B}(\mathbb{L}^{0}(\mathcal{B}(E),\lambda))$-measurable.\\
 We consider a class of coefficient as follows \\
\begin{equation}
f_{t}(y,z,u)=\hat{f}_{t}(y,z)+\int_{E}g_{t}(u(e))\nu( de).
\end{equation}
 $\hat{f}:\Omega\times[0,T]\times \mathbb{R}^{1+d}\rightarrow\bar{\mathbb{R}}$ is a $\mathcal{P}\otimes\mathcal{B}(\mathbb{R}^{d+1})$-measurable function and $g:\Omega\times[0,T]\times\mathbb{L}^{0}(\mathcal{B}(E),\lambda)\times E\rightarrow\bar{\mathbb{R}}$ to be $\mathcal{P}\otimes\mathcal{B}(\mathbb{L}^{0}(\mathcal{B}(E),\lambda))\times B(E)$-measurable function.
\\ 
\begin{Remark}
  This family of generators was introduced by Becherer in \cite{B06} to prove the existence and uniqueness of solution of Lipschitz BSDEJ when the terminal condition is bounded.
\end{Remark}
We shall consider the following BSDEJ with data $(f,\xi)$
\begin{equation}\label{Q_{exp}}
 Y_{t}=\xi+\int_{t}^{T} f(s,Y_{s},Z_{s},U_{s})ds-\int_{t}^{T}Z_{s}dW_{s}-\int_{t}^{T}\int_{E}U(t,e)\tilde{\mu}(dt,de)\hspace{1cm}\forall t\in[0,T], \P-a.s.
\end{equation}

Let us now define a solution of Backward Stochastic differential equation with jumps.
\begin{Definition}
Let $\xi$ be a $\mathcal{F}_{T}$-measurable random variable. A solution of BSDE with jumps associated to $(f,\xi)$ is a triple $(Y,Z,U)$ of progressively measurable processes in the space $S^{2}([0,T])\times\H^{2}([0,T])\times\H^{2}_{\nu}([0,T])$ that satisfy \eqref{Q_{exp}}.

\end{Definition}
In order to get an existence result, we now state our main assumption.

\begin{Assumption}
\[
\label{H1}\left\{ \begin{array}{l}
\bullet \textrm{ Continuity condition: }\forall t\in[0,T],\mathbb{P}\hbox{-a.s},\, (y,z,u)\rightarrow f_{t}(y,z,u) \hbox{is continuous }.\\
\\
\bullet \textrm{ Integrability condition : }
\forall \gamma >0 \,\,\,\E\left[\exp{(\gamma\left(e^{C_{t,T}}|\xi|+\int_t^T e^{C_{t,s}}d\Lambda_s)\right)}\right] <+\infty.\\
\\
\bullet \textrm{ Structure condition : }
\forall(y,z,u)\in \R\times \R^{d+1}, \forall t\in [0,T]\\
\\
\hspace{1cm}             -\frac{1}{\delta}j_t(-\delta u)-{\delta\over 2}|z|^2-l_t-c_t|y|\le f(t,y,z,u)\le {1\over \delta}j_t(\delta u)+{\delta\over 2}|z|^2+l_{t}+c_t|y|.\\
\textrm{ where } l_{t} \textrm{ and } c_{t} \textrm{ are two positive continuous increasing processes.}
\\
\\
\bullet \,\,A_{\gamma}\textrm{-condition : }
\textrm{there exists a } \mathcal{P}\otimes\mathcal{B}(\mathbb{R}^{d+3})\otimes\mathcal{B}(E) \textrm{measurable function } \gamma  \textrm{ with }\\  \gamma*\widetilde \mu \in {\cal U}_{\exp} \textrm{ and } \gamma>-1 \textrm{ such that }\forall y,z,u,\bar{u}\in\R\times \R^{d+2}, \forall t\in [0,T], \P-a.s\\
\\
       \hspace{2cm}  f(t,y,z,u)-f(t,y,z,\bar u)\le \int_E \gamma_t[u(x)-\bar u(x)]\xi(t,x)\lambda(dx),\\
       
\\
\end{array} \right.
\]
\end{Assumption}
where 
\begin{equation*}
j_{t}(\delta u)=\int_{\E}\left(e^{\delta u_{s}(e)}-\delta u_{s}(e)-1\right)\nu(de).
\end{equation*}
We shall also make the following assumption needed throughout the proof of existence.
\begin{Assumption}
\[
\label{H2}\left\{ \begin{array}{l}
\bullet \,\,\forall t\in[0,T],\mathbb{P}\hbox{-a.s},\, (y,z,u)\rightarrow f_{t}(y,z,u) \hbox{is continuous }.\\
\\

\bullet \textrm{ The terminal condition } \xi \textrm{ is bounded in } \mathbb{L}^{\infty }(\mathcal{F}_{T}) \textrm{ and } f_{t}(0,0,0) \textrm{ is bounded.}\\
\\
\bullet  \textrm{ f satisfies the Lipschitz condition } \forall y,\bar{y},z,\bar{z}\in\R^{2}\times\R^{2d},  \exists C>0  \textrm{ such that } \forall t\in[0,T]\\
 \\
 \hspace{2cm }|f_{t}(y,z,u)-f_{t}(\bar{y},\bar{z},\bar{u})\leq C\left( |y-\bar{y}|+|z-\bar{z}|\right) .\\
 \\
 \bullet \textrm{ f is locally Lipschitz continuous in } u.
 \\
 \\
 \bullet\,\, A_{\gamma}\textrm{-condition : }
\textrm{there exists a } \mathcal{P}\otimes\mathcal{B}(\mathbb{R}^{d+3})\otimes\mathcal{B}(E) \textrm{-measurable function } \gamma^{y,z,u,\bar{u}} \textrm{ such that }
\\
\\
       \hspace{1.2cm}  f(t,y,z,u)-f(t,y,z,\bar u)\le \int_E \gamma_t^{y,z,u,\bar{u}}(e)[u(e)-\bar u(e)]\xi(t,e)\lambda(de),\hspace{0.5cm}\forall t\in [0,T], \P-a.s.\\
       
\\
\end{array} \right.
\]
\end{Assumption}

\begin{Remark}
The above assumptions are essential in our framework to get the existence of the solution of the BSDEJ. Here we deal with a terminal condition having a finite exponential moment of order $\gamma$ and a jump measure with infinite activity. It has been shown in \cite{ELKMN16}, under assumption $(H1)$, the existence of solution
of BSDEs with finite activity jumps. \\
When the jumps have infinite activity, Becherer \cite{B14} proved the existence and the uniqueness of the solution of bounded JBSDE when the generator $f$ satisfies the assumption $\eqref{H2}$.\\
\end{Remark}
%


\subsection{Existence of solution with infinite activity}
The question is how to prove existence of solution of BSDEJ with infinite activity jumps and unbounded terminal condition without using the seminal paper of Kobylanski.\\
 The main tool is to consider an approximation sequence of globally Lipschitz  BSDEs with finite random measure for which the existence and uniqueness of the solution are well known. To insure that the approximate coefficient conserves all the properties of $f$, we use a double approximation which means that we  consider a sequence of function bounded from below and above by linear quadratic function. We split the coefficient of the BSDE into the sum of two positive and negative functions and approximate respectively each function by inf-convolution and sup-convolution. Furthermore, the BSDEJ have infinite activity jumps,  we have to deal with some specific difficulties due to the infinite number of small jumps. The idea is then to introduce a truncated measure with $\lambda (A)<\infty$  in the auxiliary BSDE for which existence of the solution is guaranteed.
Adopting the forward approach we prove that this sequence of BSDEJ is in fact an Exponential Quadratic Semimartingales. And then by the stability theorem (\ref{Stability}) we show that the limit of those solution solves the original BSDEJ.
\\
As explained above, our point of view is based on the forward approach which is essentially standing on semimartingale. Hence, let us recall from \cite{ELKMN16} the definition of exponential quadratic semimartingale as well as the class of ${\cal S}_{Q}(|\xi|,\Lambda,C)$-semimartingale needed in the sequel.
\begin{Definition}
A Quadratic Exponential Special Semimartingale $Y$ is a c\`{a}dl\'{a}g process such that $Y=Y_{0}-V+M$ with $V$ a local finite variation process and $M:=M^{c}+M^{d}$ a local martingale part with the following structure condition $\mathcal{Q}(\Lambda, C,\delta)$:\\ There exist an increasing predictable processes $C$, $\Lambda$ and a positive constant $\delta$ such that 
\begin{equation}
-\frac{\delta}{2}d \langle  M^{c}\rangle_{t}-d\Lambda_{t}-|Y_{t}|dC_{t}-j_{t}(-\delta M_{t}^{d}])\ll  dV_{t} \ll\frac{\delta}{2}d\langle M^{c}\rangle_{t}+d\Lambda_{t}+|Y_{t}|dC_{t}+j_{t}(\Delta M_{t}^{d})_{\delta}.
\end{equation}
Note that the symbol "$\ll$" means that the difference is an increasing process.
\end{Definition}
We introduce now the  class of ${\cal S}_{Q}(|\xi|,\Lambda,C)$-semimartingale  which will play an important role in the proof of the existence result. 
 \begin{Definition}\label{Class}
 ${\cal S}_{Q}(|\xi|,\Lambda,C)$  is the class of all ${\cal Q}(\Lambda,C)$-semimartingales  $Y$ such that $$|Y_t|\le \bar\rho_t\left[e^{C_{t,T}}|Y_T|+\int_t^T e^{C_{t,s}}d\Lambda_s\right], \quad \forall t\in[0,T], \P\hbox{-a.s}.$$
 where $\bar{\rho}_{\sigma}(X_{\tau})=\ln\E\left[ \exp (X_{\tau})|\mathcal{F}_{\sigma}\right]. $
\end{Definition}
We are now in position to give the main result of this paper.
\begin{Theorem}
  Under assumption \eqref{H1}, there exists a solution $(Y,Z,U)\in{\cal S}_{Q}(|\xi|,\Lambda,C)\times\mathbb{H}^{2}\times \mathbb{H}^{2}_\nu$ of the BSDEJ \eqref{Q_{exp}}.
\end{Theorem}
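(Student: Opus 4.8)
The plan is to prove the theorem by a double approximation scheme that reduces the infinite-activity quadratic problem to a sequence of bounded, globally Lipschitz BSDEJs with finite jump measure, and then to pass to the limit via the stability result (\ref{Stability}). First I would fix, for each $n$, a truncation set $A_n\subset E$ with $\lambda(A_n)<\infty$ and $A_n\uparrow E$ (for instance $A_n=\{|e|>1/n\}$), and replace $\nu$ by the truncated compensator $\nu^n(dt,de)=\zeta(t,e)\mathds{1}_{A_n}(e)\lambda(de)dt$. This already makes the jump part a finite-activity one. Simultaneously I would split the generator $f=f^+-f^-$ into its positive and negative parts and regularize each piece: approximate $f^+$ by its sup-convolution $f^{+}_{n}(t,y,z,u)=\sup_{y',z',u'}\{f^+(t,y',z',u')-n(|y-y'|+|z-z'|+|u-u'|_t)\}$ and $f^-$ by an analogous inf-convolution, also composed with a truncation that forces boundedness; set $f_n=f^+_n-f^-_n$ evaluated against $\nu^n$. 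Standard inf/sup-convolution facts give that $f_n$ is globally Lipschitz in $(y,z)$, locally Lipschitz in $u$, has bounded data, is monotone in $n$, and is still squeezed between the quadratic-exponential barriers $\underline q$ and $\bar q$ (with the same $\delta$, and with $l,c$ possibly replaced by truncated but convergent versions); one must also check that the $A_\gamma$-condition survives the regularization, which is where the specific structure $f_t(y,z,u)=\hat f_t(y,z)+\int_E g_t(u(e))\nu(de)$ is used.

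Next, for each $n$ Assumption (\ref{H2}) holds for $(f_n,\xi_n)$ where $\xi_n$ is a bounded truncation of $\xi$, so by the results of Becherer \cite{B14} (quoted in the excerpt) there exists a unique solution $(Y^n,Z^n,U^n)\in S^2\times\H^2\times\H^2_\nu$ of the associated BSDEJ. The forward step is then to observe that, because $f_n$ obeys the structure condition $\mathcal{Q}(\Lambda,C,\delta)$, the process $Y^n$ is a $\mathcal{Q}(\Lambda,C,\delta)$-semimartingale; combining this with the comparison/a priori bound that $f_n$ produces, $Y^n$ in fact lies in the class ${\cal S}_{Q}(|\xi|,\Lambda,C)$ of Definition \ref{Class}, uniformly in $n$. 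The uniform exponential integrability of $e^{C_{t,T}}|\xi|+\int_t^T e^{C_{t,s}}d\Lambda_s$ from the integrability condition in (\ref{H1}) then furnishes a uniform bound $\sup_n \E[\sup_t e^{\gamma|Y^n_t|}]<\infty$ for every $\gamma>0$, which is precisely the integrability input the stability theorem requires. I would also establish the monotonicity $Y^n\le Y^{n+1}$ (or the reverse, according to the sign convention) from the monotone convergence of the regularized generators together with a comparison theorem for Lipschitz BSDEJs, so that $Y:=\lim_n Y^n$ exists pointwise.

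Finally I would invoke the stability result (\ref{Stability}) for ${\cal S}_{Q}(|\xi|,\Lambda,C)$-semimartingales: the sequence $(Y^n,Z^n,U^n)$, being uniformly in this class with generators $f_n\to f$ and terminal values $\xi_n\to\xi$, converges (after extracting, and in the appropriate $\H^2$, $\H^2_\nu$ topologies for the martingale parts) to a triple $(Y,Z,U)$ which solves the BSDEJ \eqref{Q_{exp}} with the original data $(f,\xi)$, and $Y\in{\cal S}_Q(|\xi|,\Lambda,C)$, $Z\in\H^2$, $U\in\H^2_\nu$. The delicate points, and the ones I would expect to be the main obstacle, are two: first, showing that the strong (not merely weak) convergence of the martingale parts $Z^n\to Z$ in $\H^2$ and $U^n\to U$ in $\H^2_\nu$ holds despite the infinite activity — this is exactly where the old theorem of Barlow–Protter \cite{BP90} on the stability of semimartingale decompositions enters, and where the truncation of $\nu$ has to be controlled so that the contribution of the small jumps outside $A_n$ vanishes in the limit; and second, verifying that every structural hypothesis ($\mathcal Q(\Lambda,C,\delta)$, the $A_\gamma$-condition, and the membership in ${\cal S}_Q$) is genuinely preserved under the simultaneous inf/sup-convolution and truncation of $\nu$, since the sup-convolution of a generator with $u$-integral against $\nu$ interacts nontrivially with replacing $\nu$ by $\nu^n$. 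Once those are in place, identifying the limit as a solution of \eqref{Q_{exp}} is a routine passage to the limit in the integral equation.
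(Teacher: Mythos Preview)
Your overall plan matches the paper's: truncate $\nu$ to finite activity, regularize $f=f^+-f^-$ by convolution so that Becherer's Lipschitz existence result applies, recognize the approximating solutions as $\mathcal{Q}(\Lambda,C,\delta)$-semimartingales lying uniformly in $\mathcal{S}_Q(|\xi|,\Lambda,C)$, and pass to the limit via the Barlow--Protter-based stability result (Proposition~\ref{Stability}). Two concrete choices, however, would cause your argument to fail as written. First, your sup-convolution of $f^+$, namely $\sup_{y',z',u'}\{f^+(t,y',z',u')-n(|y-y'|+|z-z'|+|u-u'|_t)\}$, equals $+\infty$: under the structure condition $f^+$ inherits the quadratic growth $\tfrac{\delta}{2}|z|^2$ in $z$, which dominates the linear penalty for every $n$. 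The vague ``truncation that forces boundedness'' does not repair this without introducing a further index. The paper instead takes the \emph{inf}-convolution $\bar f^{n,\kappa}=\inf\{\bar f+n|\cdot|\}$ of $\bar f=f^+$ (well-defined since $f^+\geq 0$, $n$-Lipschitz, increasing to $f^+$) and analogously regularizes $\underline f=f^-$, so that $f^{n,m,\kappa}=\bar f^{n,\kappa}-\underline f^{m,\kappa}$ is Lipschitz and still satisfies $\underline q\leq f^{n,m,\kappa}\leq \bar q$.

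Second, and this is why the paper carries three separate indices $(n,m,\kappa)$, with the correct convolutions the combined generator is increasing in $n$ and in $\kappa$ but \emph{decreasing} in $m$; collapsing everything to a single $n$ destroys the monotonicity of $f_n$ and hence (via comparison) of $Y^n$. Without a monotone sequence you cannot invoke Dini's lemma for uniform convergence, and the hypothesis under which Proposition~\ref{Stability} is applied is missing. The paper therefore takes limits in order: first $\kappa\to\infty$ (monotone, bounded), then $n$ and $m$ separately, each step producing a monotone bounded sequence in $\mathcal{S}_Q(|\xi|,\Lambda,C)$ to which the stability proposition applies. The final identification of the limiting drift as $f(Y,Z,U)$ also needs a localization by the stopping times $\tau_l=\inf\{t:\E[\exp(e^{C_T}|\xi|+\int_0^T e^{C_s}d\Lambda_s)\mid\mathcal{F}_t]>l\}$ and a splitting on $\{|Z^{n,m,\kappa}|+|U^{n,m,\kappa}|\leq C\}$ versus its complement, together with the observation that the small-jump contribution $\int j_s(\delta U)\,1_{\{|e|\leq 1/\kappa\}}\nu(de,ds)$ vanishes as $\kappa\to\infty$; these are precisely the points you flag as delicate but do not carry out.
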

\begin{proof}
 For the sake of clarity we split the proof into three main steps.\\
$\bullet$ The first step consists to introduce an auxiliary generator $f^{n,m,\kappa}$ uniformly Lipschitz $(y,z)$ and locally Lipschitz in $u$  as follow
$$f^{n,m,\kappa}(y,z,u):=\bar{f}^{n,\kappa}(y,z,u)-\underline{f}^{m,\kappa}(y,z,u).$$
From this and using a well known results, we justify the existence and the uniqueness of solution  of JBSDE associated to $(f^{n,m,\kappa},|\xi|)$. The solution will be a triple of progressively measurable processes  denoted by $(Y^{n,m,\kappa},Z^{n,m,\kappa},U^{n,m,\kappa})$.\\
$\bullet$ The second step: Since we have the same assumptions on the generator of the JBSDE and the quadratic exponential semimartingales, we prove that  the solution $Y^{n,m,\kappa}$ of the BSDEJ associated to $(f^{n,m,\kappa},|\xi|)$ is a $\mathcal{Q}(\Lambda,C,\delta)$-semimartingale.\\
$\bullet$ The last step will be the convergence of the approximated sequence of BSDE ($f^{n,m,\kappa},|\xi|$). Using the stability theorem (\ref{Stability}) which is based on the stability theorem of Barlow-Protter \cite{BP90} we prove that the limit of $(Y^{n,m,\kappa},Z^{n,m,\kappa},U^{n,m,\kappa})$ exists and solves the original BSDE.\\
\\
\textbf{$\underline{Step 1}$: Construction of the truncated sequence of BSDEJs}\\  
 For $\kappa>1$ we consider a random measure $\nu^{\kappa}$ as follows 
$$ \nu^{\kappa}(w,dt,de):=1_{ \left\lbrace |e|\geq \frac{1}{k}\right\rbrace\ }\nu(dt,de) .$$
Notice that the truncated random measure $\nu^{\kappa}$ introduced above is a finite random measure i.e for all borelian set $A$, $\nu^{\kappa}(A)<+\infty$. \\ 
 Before proceeding with the proof, we will need the following proposition which provides essential properties of $(f^{n,m,\kappa})$ needed in the proof. \\ First, Let us introduce the regularization function $b_{n}$ also known  as the approximation of Moreau and Yosida
\begin{equation}\label{reg f}
b_{n}(r,v,w)=\displaystyle{\inf_{r,v,w}} \left\lbrace n|r|+n|v|+n|w| \right\rbrace .
\end{equation}
\begin{Lemma}\label{regularisation}
Under hypothesis \eqref{H1}, let us consider the generator  $\bar{f}=f^{+}$ and $\underline{f}=f_{-}$. We define the sequence $\bar{f}^{n,\kappa}$,$\bar{q}^{n,\kappa}$, $\underline{f}^{m,\kappa}$, and $\underline{q}^{m,\kappa}$ respectively as the in-convolution and sup-convolution of $\bar{f}$, $\bar{q}$, $\underline{f}$ and $\underline{q}$ with the regularization function $b_{n}$. The regularized functions are defined as follows 
\begin{itemize}[label=\textbullet,font=\large]
\item
$\bar{f}^{n,\kappa}(y,z,u):=\bar{F}^{n}(y,z)+\bar{G}^{n,\kappa}(u):=\displaystyle{\inf_{(r,w)\in\Q^{1+d}}}\left\lbrace \hat{f}_{t}(r,w)+n|r-y|+n|w-z|\right\rbrace$
$$+\displaystyle{\inf_{v\in\Q}}\left\lbrace\int_{E} g_{t}(v(e))\zeta(t,e)\lambda^{\kappa}(de)+n|u-v|_{\nu}\right\rbrace .$$
\item
$\underline{f}^{m,\kappa}(r,v,w):=\underline{F}^{m}(y,z)+\underline{G}^{m,\kappa}(u):=\displaystyle{\sup_{(r,w)\in\Q^{d+1}}}\left\lbrace \hat{f}_{t}(r,w)+m|r-y|+m|w-z|\right\rbrace $
$$+\displaystyle{\sup_{v\in\Q}}\left\lbrace \int_{E}g_{t}(v(e))\zeta(t,e)\lambda^{\kappa}(de)+m|u-v|_{\nu}\right\rbrace .$$
\item
$\bar{q}^{n,\kappa }(t,y,z,u)=\bar{q}^{\kappa}\wedge b_{n}(y,z,u) \hbox{\,\,\,\,\,\,and\,\,\,\,\,\,} \underline{q}^{m,\kappa}(t,y,z,u)=\bar{q}^{\kappa}\vee b_{m}(y,z,u).$\
\end{itemize}
We have the following essential properties
\begin{enumerate}
\item $(\bar{f}^{n,\kappa})$, $(\bar{q}^{n,\kappa})$,$(\underline{f}^{n,\kappa})$, $(\underline{q}^{n,\kappa})$ are increasing (resp. decreasing) sequences in $n,\kappa$ (resp. $m$).
\item The sequences $(\bar{f}^{n,\kappa})$, $(\bar{q}^{n,\kappa})$,$(\underline{f}^{m,\kappa})$, $(\underline{q}^{m,\kappa})$ are globally Lipschitz continuous in $(y,z)$ and locally Lipschitz in $u$ for each $n,m,\kappa .$
\item The sequences $(\bar{f}^{n,\kappa})$, $(\bar{q}^{n,\kappa})$ converge resp. to $\bar{f}$ and $\bar{q}$ as $n,\kappa$ goes to $\infty$ i.e
$$\bar{f}^{n,\kappa }\nearrow \bar{f}\,\,\,\,\hbox{and} \,\,\,\, \bar{q}\nearrow \bar{q}.$$
The convergence is also uniform.
\item The sequences $(\underline{f}^{m,\kappa})$, $(\underline{q}^{m,\kappa})$ converge respectively to $\underline{f}$ and $\underline{q}$ as $m,\kappa$ goes to $\infty$ i.e
$$\underline{f}^{m,\kappa }\nearrow \underline{f}\,\,\,\,\hbox{and} \,\,\,\, \underline{q}^{m,\kappa}\nearrow \underline{q}.$$
 \item  $(f^{n,m,\kappa})_{n,m,\kappa}$ satisfy the structure condition of Assumption \eqref{H1}.
\end{enumerate}


\end{Lemma}

\begin{Remark}
We emphasize that the regularization technique developed in proposition \eqref{regularisation} were inspired by the ones developed in the paper \cite{ELKMN16}. Nonetheless, as explained throughout the paper, the main difficulty in carrying out this construction is that the structure of this equations \eqref{Q_{exp}} are defined in infinite activity setting.
\end{Remark}
Let as now introduce  the BSDE associated to the truncated measure $\nu^{\kappa}$
\begin{equation}\label{measure trun}
  dY_{t}^{\kappa}=f^{\kappa}_{t}(Y_{t}^{\kappa},Z^{\kappa}_{t},U_{t}^{\kappa})dt-Z^{\kappa}_{t}dW_{t}+\int_{E}U^{\kappa}_{t}(e)\tilde{\mu}(dt,de)\,\,\,\,\,\, t\in[0,T],\P-a.s.
\end{equation}
where $f^{\kappa}_{t}(y,z,u)=\hat{f}_{t}(y,z)+\int_{E}g_{t}(u(e))\nu^{\kappa}(de)$.\\
The following BSDE is driven jointly by the Brownian motion $W$ and the truncated measure $\mu^{\kappa}$. The associated filtration is denoted by $\mathcal{F}_{t}^{\kappa}\subset \mathcal{F}_{t}$.\\
Note that (\ref{measure trun}) is an exponential quadratic BSDE with finite activity jumps, the generator $f^{\kappa}$ satisfies the same hypotheses as $f$.\\
\\
After we construct the BSDE associated to the truncated random measure, we introduce the intermediate BSDE $(f^{n,m,\kappa },|\xi|)$. $\forall t\in[0,T]$, $\P$-a.s
 \begin{equation}\label{Truncated BSDE}
-dY_{t}^{n,m,\kappa }=f_{t}^{n,m,\kappa}(Y^{n,m,\kappa}_{t},Z^{n,m,\kappa}_{t},U^{n,m,\kappa}_{t})dt-Z_{t}^{n,m,\kappa }dW_{t}-\int_{E}U^{n,m,\kappa}(e)\tilde{\mu}(dt,de).
 \end{equation}
First of all we have to justify the existence of solution to this BSDE. \\In fact this is a simple consequence of the existence results of \cite{B14}. Thanks to the above lemma our coefficient $f^{n,m,\kappa}$ satisfies the assumption \eqref{H2}. It remains to show that the $A_\gamma$-condition hold for $(f^{n,m,\kappa})$.  Let  $u,\bar{u}\in\mathbb{L}^{0}(\mathcal{B}(E),\nu)$ and $ y,z\in\mathbb{R}, \mathbb{R}^{d}$ such that
\begin{align*}
f^{n,m,\kappa }(t,y,z,u)-f^{n,m,\kappa }(t,y,z,\bar u)&:=[G^{n,\kappa}(t,u)-G^{n,\kappa}(t,\bar{u})]+[G^{m,\kappa}(t,u)-G^{m,\kappa}(t,\bar{u})]\\
&\leq \bar{q}^{n,\kappa}(y,z,u)-\bar{q}^{n,\kappa}(y,z,\bar{u})+\underline{q}^{m,\kappa}(y,z,u)-\underline{q}^{m,\kappa}(y,z,\bar{u}).
\end{align*}
Following \cite{ELKMN16}, we know that  $\bar{q}^{n,\kappa}$ and $\underline{q}^{m,\kappa}$ satisfy respectively the $A_{\gamma}$-condition. Hence 
\begin{align*}
\bar{q}^{n,\kappa}(y,z,u)-\bar{q}^{n,\kappa}(y,z,\bar{u})+\underline{q}^{m,\kappa}(y,z,u)-\underline{q}^{m,\kappa}(y,z,\bar{u})&\leq \int_{E}\gamma^{n}[u(e)-\bar{u}(e)]\nu^{\kappa}(de)\\
&+\int_{E}\gamma^{m}[u(e)-\bar{u}(e)]\nu^{\kappa}(de).\\
\end{align*}
where $-1 <\gamma^{n}< n $ and $-1< \gamma^{m}< m$. Therefore 
$$f^{n,m,\kappa }(t,y,z,u)-f^{n,m,\kappa }(t,y,z,\bar u)\leq \int_{E}\gamma^{n,m}(u(e),\bar{u}(e))(u(e)-\bar{u}(e)\nu^{\kappa}(de).$$
According to \cite{B14}, which deals with the lipschitz BSDE with jumps, we know that the BSDE \eqref{Truncated BSDE}  has a unique solution $(Y^{n,m,\kappa},Z^{n,m,\kappa},U^{n,m,\kappa})$.
\\
Moreover since $A_{\gamma}$-condition holds for $(f^{n,m,\kappa})_{n,m,\kappa}$, we can also apply the comparison theorem, to obtain that
 for all $t\in[0,T]$, $\P$-a.s
\begin{equation}\label{Comparison sln}
  Y_{t}^{n,m+1,\kappa}\leq Y_{t}^{n,m,\kappa }\leq Y_{t}^{n,m+1,\kappa+1 }.
\end{equation}
\textbf{\underline{Step 2: Construction of the sequence of $\mathcal{Q}(C,\Lambda,\delta )$-semimartingale }}\\
\\
In view of proposition \eqref{regularisation} and assumption \eqref{H1} we have  $$ \underline{q}_{t}\leq \underline{q}_{t}^{\kappa,m}\leq f_{t}^{n,m,\kappa}\leq \bar{q}_{t}^{\kappa,n}\leq \bar{q}_{t}\quad \forall t\in[0,T],\quad \P\hbox{-a.s.}$$
Thus, we know that all the requirement of the definition \eqref{Class} are full filled. It follow that 
$(Y^{n,m,\kappa})_{n,m,\kappa}$ defined as the unique solution of the BSDEJ \eqref{Truncated BSDE} is a $\mathcal{Q}(C,\Lambda,\delta )$-semimartingale.

Moreover from Lemma $(3.3)$ we know that for all stopping times $\sigma\leq T$ 
  $$Y_{\sigma}^{n,m,\kappa}\leq\bar{\rho}_{\sigma}\left[ e^{C_{\sigma, T}}|\xi|+\int_{\sigma}^{T}e^{C_{\sigma,s}}d\Lambda_{s}\right] \hspace{2cm}\P\hbox{-a.s}.$$

\textbf{\underline{Step 3: The convergence of the semimartingale }}\\
\\
The idea is now to prove that the limit in some sens of those sequence of $\mathcal{Q}(C,\Lambda,\delta )$-semimartingale is a solution of the BSDEJ \eqref{Q_{exp}}.\\
$\bullet$ We know from the first step that the sequence $(Y^{n,m,\kappa})_{\kappa}$ is bounded and increasing  in $\kappa$ then $(Y^{n,m,\kappa})_{\kappa}$ converge in $\H^{2p}(\R)$ to $Y^{m,n }$ such that $$Y^{n,m,\kappa}\nearrow Y^{n,m }\,\,\,\,\,\,\,\, \kappa \,\,\,\,\hbox{goes to}\,\,\,\, \infty .$$
Thanks to the Dini's lemma \cite{Del79} the convergence is uniform. Hence it follows from Proposition (\ref{Stability})  that $Y^{n,m}$ is a $\mathcal{Q}(\Lambda,C,\delta)$-semimartingale and satisfies
\begin{equation}
\label{estimate:variation1}
\mathbb{E} \Big[\int_0^T |dV^{n,m}_s| \, \Big] \leq C, \quad \mbox{and} \quad \mathbb{E} \, \big[ \big(M^{n,m} \big)^{\star} \big] \leq C.\nonumber
\end{equation}

\begin{equation}
\label{limite:variation1}
\lim_{ \kappa \to \infty} \mathbb{E} \, \big[ \big(V^{n,m,\kappa} - V^{n,m}\big)^* \big] = 0 \quad \mbox{and} \quad  \lim_{\kappa\to\infty} \|M^{n,m,\kappa} - M^{n,m}\|_{\mathcal{H}^1} = 0.\nonumber
\end{equation}
$\bullet$ We proceed exactly as \cite{ELKMN16} since $(Y^{n,m})_{n}$ and $(Y^{n,m})_{m}$ are monotone bounded uniformly sequences. Therefore they converge monotonically to some process $Y$ i.e   $$ \displaystyle{\lim_{n,m}}\searrow Y^{n,m}= Y.$$
 Using the same arguments  $Y$ is a  $\mathcal{Q}(\Lambda,C,\delta)$-semimartingale. and the following estimates holds
\begin{equation}
\label{estimate:variation2}
\mathbb{E} \Big[\int_0^T |dV_s| \, \Big] \leq C, \quad \mbox{and} \quad \mathbb{E} \, \big[ \big(M \big)^{\star} \big] \leq C.\nonumber
\end{equation}

\begin{equation}
\label{limite:variation2}
\lim_{ n,m \to \infty} \mathbb{E} \, \big[ \big(V^{n,m}-V\big)^* \big] = 0 \quad \mbox{and} \quad  \lim_{ n,m \to \infty} \|M^{n,m } - M\|_{\mathbb{H}^1} = 0.\nonumber
\end{equation}
$\bullet$ It remains to show that the $\mathcal{Q}(\Lambda,C,\delta)$-semimartingale $Y$ is the solution of the \emph{exponential quadratic BSDEJ} \eqref{Q_{exp}} such that \\
\begin{equation}
f(Y,Z,U)=\displaystyle{\lim_{n,m,\kappa}}f^{n,m,\kappa}(Y^{n,m,\kappa},Z^{n,m,\kappa},U^{n,m,\kappa})\,\,\,\,\,\,\,\,\,\, d\mathbb{P}\otimes d\nu \,\,\,\,\hbox{a-s}\nonumber
\end{equation}
\begin{equation}
(Z^{n,m,\kappa}.W+U^{n,m,\kappa}\star\tilde{\mu})=\displaystyle{\lim_{n,m,\kappa}}(Z.W+U\star\tilde{\mu}).\nonumber
\end{equation}
We need to define a sequence of stopping times $\tau_{l}$ related to the class  $\mathcal{Q}(\Lambda,C,\delta)$ such that $\tau_{l}$ goes to $\infty$ for a large $l$. Let us fix $L\in\N^{*}$ such that 
$$\tau_{l}:=\displaystyle{\inf_{t\geq 0}}\left\lbrace \E\left[ \exp(e^{C_{T}}|\xi|+\int_{0}^{T}e^{C_{s}}d\Lambda_{s}|\mathcal{F}_{t}\right] >l\right\rbrace .$$
According to the first part of the proof, the monotone convergence of $(Y^{n,m,\kappa})_{.\wedge \tau_{l}}:=((M^{n,m,\kappa})^{c}+V^{n,m,\kappa}+U^{n,m,\kappa}.\tilde{\mu})_{.\wedge\tau_{l}}$ is uniform. Therefore, we can extract a subsequence of $(M^{n,m,\kappa})_{.\wedge\tau_{l}}$ which converges strongly to $M_{.\wedge\tau_{l}}$
$$(M^{n,m,\kappa})_{t\wedge\tau_{l}}=Z^{n,m,\kappa}_{t}1_{\left\lbrace t\leq \tau_{l}\right\rbrace}.W+U^{n,m,\kappa}1_{\left\lbrace t\leq \tau_{l}\right\rbrace}.\tilde{\mu}.$$
Once again, we can subtract a subsequence $Z^{n,m,\kappa}_{t}1_{\left\lbrace t\leq \tau_{l}\right\rbrace}$ and $U^{n,m,\kappa}1_{\left\lbrace t\leq \tau_{l}\right\rbrace}$ converge almost surely to $Z$ and $U$ in $\H^{2}\times\H^{2}_{\nu}$
 
$$dV^{n,m,\kappa}_{.\wedge\tau_{l}}:=\hat{f}^{n,m,\kappa}_{t}(Y_{t\wedge\tau_{l}}^{n,m,\kappa},Z_{t\wedge\tau_{l}}^{n,m,\kappa})1_{\left\lbrace t\leq\tau_{l}\right\rbrace}dt+G^{n,m,\kappa}(U_{t\wedge\tau_{l}}^{n,m,\kappa})1_{\left\lbrace t\leq\tau_{l}\right\rbrace}.$$\\
where $\hat{f}^{n,m,\kappa}(y,z)=\bar{F}^{n}(y,z)+\underline{F}^{m}(y,z)$ and $G^{n,m,\kappa}(u)=\bar{G}^{n,\kappa}(u)+\underline{G}^{m,\kappa}(u)$. 
\\
\\
$\bullet$ As a last step, we will show that $f^{n,m,\kappa}(Y^{n,m,\kappa},Z^{n,m,\kappa},U^{n,m,\kappa})$ converge to $f(Y,Z,U)$ in $\mathbb{L}^{1}( d\P\otimes d\nu\otimes dt)$.
In fact as $Z^{n,m,\kappa}$ and $U^{n,m,\kappa}$ are unbounded,  one can decompose the expression  above in 2 quantities: one in the region where $\left\lbrace|Z^{n,m,\kappa}|+|U^{n,m,\kappa} | \leq C\right\rbrace$ and the other in the region $\left\lbrace|Z^{n,m,\kappa}|+|U^{n,m,\kappa}| > C \right\rbrace$.
\begin{align*}
&\displaystyle{\E\left[\int_{0}^{\tau_{l}}|f^{n,m,\kappa}(Y_{s}^{n,m,\kappa},Z_{s}^{n,m,\kappa},U_{s}^{n,m,\kappa})-
f_{s}(Y_{s},Z_{s},U_{s})|ds\right]}\\
=&\displaystyle{ \E\left[\int_{0}^{\tau_{l}}|f^{n,m,\kappa}(Y_{s}^{n,m,\kappa},Z_{s}^{n,m,\kappa},U_{s}^{n,m,\kappa})-
f_{s}(Y_{s},Z_{s},U_{s})|1_{\left\lbrace|Z^{n,m,\kappa}|+|U^{n,m,\kappa} | \leq C\right\rbrace}ds\right]=:A_{1}}\\
+&\displaystyle{\E\left[\int_{0}^{\tau_{l}}|f^{n,m,\kappa}(Y_{s}^{n,m,\kappa},Z_{s}^{n,m,\kappa},U_{s}^{n,m,\kappa})-
f_{s}(Y_{s},Z_{s},U_{s})|1_{\left\lbrace|Z^{n,m,\kappa}|+|U^{n,m,\kappa} | > C\right\rbrace}ds\right]=:A_{2}}\\
\end{align*}
We start by studying the first term $A_{1}$. Observe that in the region $\left\lbrace|Z^{n,m,\kappa}|+|U^{n,m,\kappa} | \leq C\right\rbrace$, since $Y^{n,m,\kappa}$ is bounded in $[0,\tau_{l}]$ 
\begin{align*}
A_{1}&=\displaystyle{\E\left[\int_{0}^{\tau_{l}}|f^{n,m,\kappa}(Y_{s}^{n,m,\kappa},Z_{s}^{n,m,\kappa},U_{s}^{n,m,\kappa})-
f_{s}(Y_{s},Z_{s},U_{s})|1_{\left\lbrace|Z^{n,m,\kappa}|+|U^{n,m,\kappa} | \leq C\right\rbrace}ds\right]}\\
&\displaystyle{\leq \phi_{s}+\E\left[\int_{0}^{\tau_{l}}|G^{n,\kappa}(u)-G(u)|1_{\left\lbrace|Z^{n,m,\kappa}|+|U^{n,m,\kappa} | \leq C\right\rbrace}ds\right]}\\
&\displaystyle{\leq \phi_{s}+E\left[ \int_{0}^{\tau_{l}}j^{\kappa}_{s}(\delta U^{n,m,\kappa}(e))-j_{s}(\delta U(e))1_{\left\lbrace |U^{n,m,\kappa}|\leq C\right\rbrace }ds \right]}.
 \end{align*}
 Therefore, to prove that $ f^{n,m,\kappa}(Y^{n,m,\kappa},Z^{n,m,\kappa},U^{n,m,\kappa})-f(Y,Z,U)$ is bounded in $\L^{1}$, it is sufficient to show that 
$\E\left[ \int_{t}^{T}j^{\kappa}_{s}(\delta U^{n,m,\kappa}(e))-j_{s}(\delta U(e))1_{\left\lbrace |U^{n,m,\kappa}|\leq C\right\rbrace }ds \right]$ converge to zero 
as $n,m,\kappa\rightarrow+\infty$. and by Dominated convergence theorem we get the desire result.\\ We start by following a technique similar to the one used in the proof of Theorem $(4.13)$  of \cite{B14}. Observe that $j(\delta U(e))=j(\delta U(e))1_{\left\lbrace |e|\leq \kappa \right\rbrace }+j(\delta U(e))1_{\left\lbrace |e|\geq \kappa \right\rbrace }$.\\
Hence one can write 
\begin{align*}
&\E\left[ \int_{t}^{\tau_{l}}j^{\kappa}_{s}(\delta U^{n,m,\kappa}(e))-j_{s}(\delta U(e))1_{\left\lbrace |U^{n,m,\kappa}|\leq C\right\rbrace }ds \right]\\
&\leq  \E\left[ \int_{t}^{\tau_{l}}j^{\kappa}_{s}(\delta U^{n,m,\kappa}(e))-j^{\kappa}_{s}(\delta U(e))1_{\left\lbrace |U^{n,m,\kappa}|\leq C\right\rbrace }1_{\left\lbrace |e|\geq \frac{1}{\kappa}\right\rbrace}\nu(de,ds)\right]\nonumber\\
&+\E\left[ \int_{t}^{\tau_{l}}j_{s}(\delta U(e))1_{\left\lbrace |U^{n,m,\kappa}|\leq C\right\rbrace } 1_{\left\lbrace|e|\leq \frac{1}{\kappa}\right\rbrace}\nu(de,ds)\right].
\end{align*}
The first term in the above inequality tend to zero since $j^{\kappa}_{t}(\delta U^{n,m,\kappa})-j^{\kappa}_{t}(\delta U)$ is uniformly bounded in $\mathbb{L}^{1}$. For the last one, using the fact that $1_{\left\lbrace|e|\leq \frac{1}{\kappa}\right\rbrace}$ tend to zero as $\kappa\rightarrow 0$, we see that it also tend to zero. 
\\ 
$\bullet$ Finally let us study the term $A_{2}$. By Tchebychev inequality we have \\$$\E \left[ 1_{\left\lbrace |Z^{n,m,\kappa}|+|U^{n,m,\kappa}|\geq C\right\rbrace}\right]\leq \frac{2}{C^{2}}\E\left[ |Z^{n,m,\kappa}|^{2}+|U^{n,m,\kappa}|^{2}\right].$$
Hence, for $t\leq \tau_{l}$ from the dominated convergence theorem $f^{n,m,\kappa}(Y^{n,m,\kappa},Z^{n,m,\kappa},U^{n,m,\kappa})$ converge to $f(Y,Z,U)$ in $\mathbb{L}^{1}( d\P\otimes d\nu\otimes dt)$. Thus for $t\leq \theta_{l}$ $dV_{t}=f(t,Y,Z,U)dt$.\\
\textbf{Conclusion}\\
The exponential quadratic semimartingale $Y$ is such that $Y=Y_{0}+V+M^{c}+U.\tilde{\mu}$
is solution of the original BSDE \eqref{Q_{exp}} in $S^{2}\times\mathbb{H}^{2}\times\mathbb{H}^{2}_{\nu}$.
\end{proof}
\appendix
\section{Appendix}
\subsection{Proof of lemma \eqref{regularisation}}
$(i)$  We can start to notice that, due to the properties of the inf-convolution  of the sequence $(\bar{f^{n,\kappa}})_{n}$ and $(\bar{q}^{n,\kappa})_{n}$ are increasing. Moreover $(\underline{f}^{m,\kappa})_{m}$ and $(\underline{q}^{m,\kappa})_{m}$  are decreasing. The monotonicity of the coefficient arises from the regularization function $b$. The main point for the monotonicity of the coefficients in $\kappa$ is to notice that $g(v(e))1_{\left\lbrace|e|\geq\frac{1}{\kappa}\right\rbrace}$ is smaller then $g(v(e))$.\\
$(ii)$ To prove that $(\bar{f}^{n,\kappa})_{n,\kappa}$ is uniformly lipschitz in $(y,z)$,  we consider the function $\tilde{f}^{n,\kappa}$ such that  $\forall\epsilon >0$ , $y_{1},y_{2}$, $z_{1},z_{2}$ and $y_{\epsilon}\in\Q$, $z_{\epsilon}\in\Q^{d}$ we have
$$f^{n,\kappa}(t,y^{1},z^{1},u)\geq \tilde{f}^{n,\kappa}_{t}(y_{\epsilon},z_{\epsilon},u)+n|y-y_{\epsilon}|+|z-z_{\epsilon}|-\epsilon .$$
where $\tilde{f}^{n,\kappa}_{t}(y_{\epsilon},z_{\epsilon},u):=\inf_{v\in\Q}\left\lbrace \int_{E}g_{t}(v(e))\nu^{\kappa}(de)+n|u-v|_{\nu}+\hat{f}(r,w)\right\rbrace .$\\
\begin{align}
f^{n,\kappa}(t,y^{1},z^{1},u) &\geq \tilde{f}(y_{\epsilon}, z_{\epsilon},u) +n|y^{2}-y_{\epsilon}|+n|z^{2}-z_{\epsilon}|+n|y^{1}-y_{\epsilon}|+n|z^{1}-z_{\epsilon}|+\epsilon \nonumber\\
&\geq \tilde{f}(y_{\epsilon}, z_{\epsilon},u) -n|y^{1}-y^{2}|-n|z^{1}-z^{2}|+n|y^{2}-y_{\epsilon}|+n|z^{2}-z_{\epsilon}|+\epsilon . \nonumber
\end{align}
Hence,\\
$$f^{n,\kappa}(t,y^{1},z^{1},u)\geq f^{n,\kappa} (t,y^{2},z^{2},u)- n|y^{1}-y^{2}|-n|z^{1}-z^{2}|+\epsilon .$$
Indeed, by arbitrariness of $\epsilon$ and by interchanging respectively the roles of $(y^{1}$, $z^{1})$ and $(y^{2}$, $z^{2})$ we get the desire result. The same argument remains valid for  $(\underline{f}^{m,\kappa})$ and  $(\underline{q}^{m,\kappa})$. \\
- Let $u,\bar{u}\in\mathbb{L}^{0}(\mathcal{B}(E),\nu)$  such that
\begin{align*}
&f^{n,\kappa}(y,z,u)-f^{n,\kappa}(y,z,\bar{u}):=(\bar{F}^{n}(y,z)-G^{n,\kappa}(u))-(\bar{F}^{n}(y,z)+G^{n,\kappa}(\bar{u}))\nonumber\\
&=\displaystyle{\inf_{v\in\Q}}\left\lbrace\int_{E}g_{t}(v(e))1_{\left\lbrace|e|\geq \frac{1}{\kappa}\right\rbrace}\lambda(de)+n|u-v|_{\nu}\right\rbrace
-\displaystyle{\inf_{v\in\Q}}\left\lbrace\int_{E}g_{t}(v(e))1_{\left\lbrace|e|\geq \frac{1}{\kappa}\right\rbrace}\lambda(de)+n|\bar{u}-v|_{\nu}\right\rbrace .\nonumber
\end{align*}
Notice that
$$\displaystyle{\inf_{v\in\Q}}(f(v))-\displaystyle{\inf_{v\in\Q}}(g(v))\leq\displaystyle{\sup_{v\in\Q}}(f(v)-g(v)) .$$ \\ Thus we obtain
\begin{align}
f^{n,\kappa}(y,z,u)-f^{n,\kappa}(y,z,\bar{u})\leq\,&\displaystyle{\sup_{v\in\Q}}\left\lbrace n|u-v|_{\nu}-n|\bar{u}-v|_{\nu}\right\rbrace\leq \displaystyle{\sup_{v\in\Q}}\left\lbrace \int_{E}n|u-v|^{2}-n|\bar{u}-v|^{2}\zeta(t,e)\lambda(de)\right\rbrace\nonumber\\
\leq\,& n\int_{E}\displaystyle{\sup_{v\in\Q}}\left\lbrace|u+\bar{u}-2v|\rbrace\right|u-\bar{u}|\zeta(t,e)\lambda(de)\nonumber\\
\leq& n\int_{E}(|u|+|\bar{u}|)|u-\bar{u}|\zeta(t,e)\lambda(de).\nonumber\\
\end{align}
Then it follows from the Cauchy-Schwartz inequality that 
$$f^{n,\kappa}(y,z,u)-f^{n,\kappa}(y,z,\bar{u})\leq n \left\lbrace\|u\|_{t}+n \|\bar{u}\|_{t}\right\rbrace \|u-\bar{u}\|_{t} .$$
Hence the result.\\
$(iii), (iv)$ The convergence of the sequences in $n,m$ results  from to  the regularization function $b$. When we fix $n$ and $m$, the convergence of the sequence is only obtained by the convergence of the second part in the expression of $(\bar{f}^{n,\kappa})_{\kappa}$. \\Hence,
\begin{align}\bar{f}^{n,\kappa}(y,z,u)-\bar{f}^{n}(y,z,u)&=\displaystyle{\inf_{v\in\Q}}\left\lbrace\int_{\left\lbrace|e|\geq \frac{1}{\kappa}\right\rbrace} g_{t}(v(e))\zeta(t,e)\lambda(de)+n|u-v|_{\nu}\right\rbrace\nonumber \\
&-\displaystyle{\inf_{v\in\Q}}\left\lbrace\int_{E} g_{t}(v(e))\zeta(t,e)\lambda(de)+n|u-v|_{\nu}\right\rbrace\nonumber\\
&\leq \displaystyle{\sup_{v\in\Q}}\left\lbrace\int_{\left\lbrace|e|\geq\frac{1}{\kappa}\right\rbrace}g_{t}(v(e))\lambda(de)-\int_{E}g_{t}(v(e))\lambda(de)\right\rbrace\nonumber\\
&\leq \displaystyle{\sup_{v\in\Q}}\left\lbrace\int_{E}g_{t}(v(e))1_{[-\frac{1}{\kappa},\frac{1}{\kappa}]}\lambda(de)\right\rbrace .\nonumber
\end{align}
Since $\displaystyle{\sup_{v\in\Q}}\, g_{t}(v(e))1_{[-\frac{1}{\kappa},\frac{1}{\kappa}]}$ converge to zero, the result follows from the standard dominated convergence Theorem.\\
$(v)$ Finally 
\begin{center}
$
\begin{cases}

  &\bar{f}^{n,\kappa}\leq \bar{f}^{n}\leq \bar{q^{n,\kappa}}\leq \bar{q} \nonumber\\
     &\underline{f}^{m,\kappa}\leq\bar{f}^{m}\leq\underline{q}^{m,\kappa}\leq \underline{q}\nonumber
         \end{cases}$
$\,\,\,\,\,\Rightarrow\underline{q}\leq f^{n,m,\kappa}:=\bar{f}^{n,\kappa}-\underline{f}^{m,\kappa}\leq\bar{q}.$
\end{center}
\subsection{Exponential quadratic semimartingale and its properties}
Our goal is to prove the existence of a maximal solution of the quadratic BSDEJ using the tools introduced in \cite{ELKMN16}. Adopting this approach we summarize in this section the essential properties of the quadratic exponential semimartingales as well as a stability result   which we shall use for the construction of the solution the BSDE.
\begin{Definition}
A Quadratic Exponential Special Semimartingale $Y$ is a c\`{a}dl\'{a}g process such that $Y=Y_{0}-V+M$ with $V$ a local finite variation process and $M$ a local martingale part with the following structure condition $\mathcal{Q}(\Lambda, C,\delta)$:\\ for an increasing predictable processes $C$, $\Lambda$ and a constant $\delta$
\begin{equation}\label{structure}
-\frac{\delta}{2}d \langle  M^{c}\rangle_{t}-d\Lambda_{t}-|Y_{t}|dC_{t}-j_{t}(-\delta M_{t}^{d}])\ll  dV_{t} \ll\frac{\delta}{2}d\langle M^{c}\rangle_{t}+d\Lambda_{t}+|Y_{t}|dC_{t}+j_{t}(\Delta M_{t}^{d})_{\delta}.
\end{equation}
with 
\begin{equation*}
j_{t}(\delta u)=\int_{\E}\frac{\exp{\delta u(e)}-\delta u(e)-1}{\delta}\nu(de).
\end{equation*}
Note that the symbol "$\ll$" means that the difference is an increasing process.
\end{Definition}
We start by an example of this class of semimartingale.
\begin{Example}
The structure condition \eqref{structure} holds in the cases below:
 \begin{itemize}
\item A semimartingale $Y$ where the finite variation process $V$ is given by  $V_{t}=\frac{1}{2} \langle M^{c}\rangle_{t}+j_{t}(\delta M_{t}^{d})$ is a exponential quadratic  semimartingale.  
\item  If the finite variation part of  a semimartingale $Y$ satisfies
$$ -\frac{1}{2} \langle M^{c}\rangle_{t}-j_{t}(-\delta M_{t}^{d}) \ll V_{t}\ll \frac{1}{2} \langle M^{c}\rangle_{t}+j_{t}(\delta M_{t}^{d}).$$ 
then $Y$ is a exponential quadratic semimartingale.
\end{itemize}
\end{Example}
The following proposition gives us a characterization of a canonical class of quadratic exponential semimartingale. The canonical quadratic semimartingale is a semimartingale with $V= -{1\over 2}\langle \bar M^c\rangle_t-j(-\Delta M_{t}^{d}).\nu_t$ or $V= {1\over 2}\langle \bar M^c\rangle_t+j(\Delta M_{t}^{d}).\nu_t$ . This characterization will be useful in the sequel. 
\begin{Proposition}\cite{ELKMN16}\\ Let $\bar M=\bar M^c+\bar U.\widetilde \mu$ and $\underline{M}=\underline {M}^c+\underline{U}.\widetilde \mu$ two {c\`adl\`ag} local martingales such that  $\bar M^c+(e^{\bar U}-1).\widetilde \mu$ and 
$-\underline{M}^c+(e^{-\underline{U}}-1).\widetilde \mu$
are still {c\`adl\`ag} local martingales. Let define the canonical local quadratic exponential semimartingale:
\begin{equation*}
\begin{split}
&r(\bar M)=r(\bar M_0)+\bar M_t-{1\over 2}\langle \bar M^c\rangle_t-(e^{\bar U}-\bar U-1).\nu_t.\\
&\underline{r}(\underline{M})=\underline{r}(\underline{M}_0)+\underline{M}_t+{1\over 2}\langle \underline{M}^c\rangle_t+(e^{-\underline{U}}+\underline{U}-1).\nu_t.
\end{split}
\end{equation*}
then the following processes:
$$\exp[r(\bar M)-r(\bar M_0)]={\cal E}\left(\bar M^c+(e^{\bar U}-1).\widetilde\mu\right) \hbox{ and }\exp[-\underline{r}(\underline{M})+\underline{r}(\underline{M}_0)]={\cal E}\left(-\underline{M}^c+(e^{-\underline{U}}-1).\widetilde\mu\right)$$
\noindent are positive local martingales.
\end{Proposition}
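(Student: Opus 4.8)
The plan is to recognise each of the two exponential processes as a Dol\'eans--Dade stochastic exponential of the local martingale displayed on its right-hand side, so that the local martingale property and positivity become automatic. Set
$$N:=\bar M^c+(e^{\bar U}-1)\star\widetilde\mu,\qquad N':=-\underline M^c+(e^{-\underline U}-1)\star\widetilde\mu,$$
which are c\`adl\`ag local martingales by the standing hypothesis. Since $\nu$ carries no time atoms, jumps are simple, $\Delta\bar M_s=\bar U_s(\Delta X_s)$, whence $\Delta N_s=e^{\bar U_s(\Delta X_s)}-1>-1$ and, symmetrically, $\Delta N'_s>-1$; this is the sign observation behind positivity.

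First I would treat $r(\bar M)$. Write $Y_t:=r(\bar M)_t-r(\bar M_0)=(\bar M_t-\bar M_0)-\tfrac12\langle\bar M^c\rangle_t-(e^{\bar U}-\bar U-1)\star\nu_t$, note that $\bar M^c$ is its continuous local martingale part, and apply It\^o's formula to $X_t:=e^{Y_t}$:
$$X_t=1+\int_0^tX_{s-}\,dY_s+\tfrac12\int_0^tX_{s-}\,d\langle\bar M^c\rangle_s+\int_0^t\!\!\int_E X_{s-}\big(e^{\bar U_s(e)}-1-\bar U_s(e)\big)\mu(ds,de).$$
The finite-variation contribution $-\tfrac12\,d\langle\bar M^c\rangle_s$ inside $dY_s$ cancels the second integral; splitting $\mu=\widetilde\mu+\nu$ in the last integral, its $\nu$-part cancels the $-X_{s-}(e^{\bar U}-\bar U-1)\star\nu$ coming from $dY_s$; and regrouping the surviving compensated integrals ($\bar U\star\widetilde\mu$ from $d\bar M$ together with $(e^{\bar U}-1-\bar U)\star\widetilde\mu$) yields
$$X_t=1+\int_0^tX_{s-}\,d\bar M^c_s+\int_0^t\!\!\int_E X_{s-}\big(e^{\bar U_s(e)}-1\big)\widetilde\mu(ds,de)=1+\int_0^tX_{s-}\,dN_s,$$
so $X={\cal E}(N)$. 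As $N$ is a local martingale and $X_{-}$ is c\`adl\`ag adapted, hence locally bounded, $\int_0^{\cdot}X_{s-}\,dN_s$ is a local martingale, and so is $X$; positivity is immediate from $X=e^{Y}>0$, in agreement with $1+\Delta N>0$. The claim for $\underline r(\underline M)$ follows by running the same argument with $\bar M$ replaced by $-\underline M$ and $\bar U$ by $-\underline U$: the left-hand side becomes $-\underline r(\underline M)+\underline r(\underline M_0)$ and $N$ becomes $N'$, which is a local martingale by the second hypothesis. (Equivalently, one may expand $\log{\cal E}(N)$ via the product formula, use $\log(1+\Delta N_s)=\bar U_s(\Delta X_s)$, and check termwise, after writing $H\star\mu=H\star\widetilde\mu+H\star\nu$, that it equals $Y$.)

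The point requiring care in the infinite-activity setting is only the existence and finiteness of the objects manipulated above: the jump sum $\sum_{0<s\le t}(e^{\bar U_s(\Delta X_s)}-1-\bar U_s(\Delta X_s))$ must converge and $(e^{\bar U}-1)\star\widetilde\mu$ must be well defined. The first holds because $e^x-1-x\ge0$ for all $x\in\R$, so the sum is non-decreasing and, after localisation, dominated by a constant multiple of its compensator $(e^{\bar U}-\bar U-1)\star\nu_t$, which is finite precisely because it enters the definition of $r(\bar M)$; the second is exactly the standing hypothesis. I expect this bookkeeping --- organising every step so that only compensated integrals and the finite $\nu$-integral $(e^{\bar U}-\bar U-1)\star\nu$ ever occur, never the individually ill-defined $\bar U\star\mu$ or $(e^{\bar U}-1)\star\mu$ --- to be the sole genuine obstacle; the remainder is the standard It\^o/Dol\'eans--Dade computation.
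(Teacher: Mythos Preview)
Your argument is correct: applying It\^o's formula to $e^{Y}$ with $Y=r(\bar M)-r(\bar M_0)$, cancelling the $\tfrac12\langle\bar M^c\rangle$ terms, and splitting the jump sum $\int X_{s-}(e^{\bar U}-1-\bar U)\,\mu$ into its compensated and $\nu$-parts is exactly the standard route to identifying $e^{Y}$ with ${\cal E}(N)$, and your remarks on positivity ($\Delta N>-1$) and on the integrability bookkeeping in the infinite-activity case are to the point. One harmless slip: $X_{-}$ is c\`agl\`ad, not c\`adl\`ag, but that is precisely what makes it predictable and locally bounded, so the conclusion stands.

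There is, however, nothing to compare against in this paper: the proposition is quoted from \cite{ELKMN16} and no proof is reproduced here. Your write-up therefore supplies what the present paper omits, and it does so by the same Dol\'eans--Dade/It\^o computation one would expect in \cite{ELKMN16}.
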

 \begin{Proposition} Let $\psi_T \in {\cal F}_T$ such that $\exp(|\psi_T|)\in \L^1$ and consider the two dynamic risk measures:
$$\bar\rho_t(\psi_T)=\ln\left[\mathbb{E}\left(\exp(\psi_T)\vert{\cal F}_t\right)\right],\hbox{ and } \underline{\rho}_t(\psi_T)=-\ln\left[\mathbb{E}\left(\exp(-\psi_T)\vert {\cal F}_t\right)\right].$$
There exists local martingales $\bar M=\bar M^c+\bar U.\widetilde \mu$ and $\underline{M}=\underline{M}^c+\underline{U}.\widetilde \mu$ such that:
\begin{equation*}
\begin{split}
&-d\bar\rho_t(\psi_T)=-d\bar M_t+{1\over 2}d\langle \bar M^c\rangle_t+\int_E(e^{{\bar U}(s,x)}-\bar U(s,x)-1).\nu(dt,dx),\quad \bar\rho_T(\psi_T)=\psi_T.\\&
-d\underline{\rho}_t(\psi_T)=-d\underline{M}_t-{1\over 2}d\langle \underline{M}^c\rangle_t-\int_E(e^{-\underline{U}(s,x)}+\underline{U}(s,x)-1).\nu(dt,dx),\quad \underline{\rho}_T(\psi_T)=\psi_T.
\end{split}
\end{equation*}
Moreover the local martingales $\bar M^c+(e^{\bar U}-1).\widetilde\mu$ and $-\underline{M}^c+(e^{-\underline{U}}-1).\widetilde\mu$ belong to ${\cal U}_{\exp}$. The dynamic risk measures  $\bar\rho(\psi_T)$ and $\underline{\rho}(\psi_T)$ are uniformly integrable canonical quadratic exponential semimartingales.
 \end{Proposition}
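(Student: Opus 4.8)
The plan is to read off both dynamics from the weak martingale representation property combined with Itô's formula applied to the logarithm of an exponential martingale; the membership in ${\cal U}_{\exp}$ and the canonical, uniformly integrable character of $\bar\rho$ and $\underline\rho$ will then be immediate consequences of recognizing the relevant local martingale as a stochastic logarithm.

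First I would introduce $\bar N_t := \E[\exp(\psi_T)\mid\Fc_t]$. It is a uniformly integrable martingale because $\exp(\psi_T)\le\exp(|\psi_T|)\in\mathbb{L}^1$, and it is strictly positive: a nonnegative c\`adl\`ag martingale that reaches $0$ is absorbed there, whereas $\bar N_T=\exp(\psi_T)>0$, so $\bar N_t>0$ and $\bar N_{t^-}>0$ for all $t$. Invoking the weak representation property of the preliminaries I write $\bar N=\bar N_0+\int_0^\cdot \bar Z_s\,dW_s+\int_0^\cdot\int_E \bar V_s(e)\,\widetilde\mu(ds,de)$ and set $\bar M^c:=\int_0^\cdot (\bar Z_s/\bar N_{s^-})\,dW_s$, $\bar U_s(e):=\ln(1+\bar V_s(e)/\bar N_{s^-})$ (so that $e^{\bar U}-1=\bar V/\bar N_{s^-}$), $\bar M^d:=\bar U.\widetilde\mu$ and $\bar M:=\bar M^c+\bar M^d$.

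Next I would apply Itô's formula for $\ln$, which is $C^2$ on $(0,\infty)$, to $\bar\rho_t=\ln\bar N_t$. The continuous contribution produces $\bar M^c-\tfrac12\langle\bar M^c\rangle$; the jump terms $\sum_{s\le\cdot}[\ln\bar N_s-\ln\bar N_{s^-}-\Delta\bar N_s/\bar N_{s^-}]$ rewrite, after adding $\int(\bar V/\bar N_{s^-})\,\widetilde\mu=\int(e^{\bar U}-1)\,\widetilde\mu$ and splitting $\widetilde\mu=\mu-\nu$, as $\bar M^d-\int_E(e^{\bar U}-\bar U-1)\,\nu$. This gives $d\bar\rho_t=d\bar M_t-\tfrac12 d\langle\bar M^c\rangle_t-\int_E(e^{\bar U}-\bar U-1)\nu(dt,de)$ with $\bar\rho_T=\psi_T$, i.e. the first displayed equation. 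The second follows by the symmetric computation: set $\underline N_t:=\E[\exp(-\psi_T)\mid\Fc_t]$, represent it as $\underline N_0+\int\underline Z\,dW+\int\underline V\,\widetilde\mu$, and apply Itô to $-\ln\underline N_t$. With $\underline M^c:=-\int_0^\cdot(\underline Z_s/\underline N_{s^-})\,dW_s$, $\underline U_s(e):=-\ln(1+\underline V_s(e)/\underline N_{s^-})$ and $\underline M^d:=\underline U.\widetilde\mu$, the jump compensator becomes $\int_E(e^{-\underline U}+\underline U-1)\nu$ and the quadratic term changes sign, producing $-d\underline\rho_t=-d\underline M_t-\tfrac12 d\langle\underline M^c\rangle_t-\int_E(e^{-\underline U}+\underline U-1)\nu(dt,de)$ with $\underline\rho_T=\psi_T$.

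Finally, the remaining assertions are direct. By construction $\bar M^c+(e^{\bar U}-1).\widetilde\mu=\int_0^\cdot \bar N_{s^-}^{-1}\,d\bar N_s$ is the stochastic logarithm of $\bar N$, so its Dol\'eans--Dade exponential equals $\bar N/\bar N_0$, a uniformly integrable martingale; this places $\bar M^c+(e^{\bar U}-1).\widetilde\mu$ in ${\cal U}_{\exp}$ and shows $\bar\rho=\ln\bar N$ is a uniformly integrable semimartingale whose finite-variation part is $V=\tfrac12\langle\bar M^c\rangle+j(\Delta\bar M^d).\nu$, i.e. the first canonical form. The same computation gives $-\underline M^c+(e^{-\underline U}-1).\widetilde\mu=\int_0^\cdot\underline N_{s^-}^{-1}\,d\underline N_s\in{\cal U}_{\exp}$ and the canonical form $V=-\tfrac12\langle\underline M^c\rangle-j(-\Delta\underline M^d).\nu$ for $\underline\rho$. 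I expect the main obstacle to be the analytic bookkeeping around strict positivity of $\bar N_{s^-}$ (needed both to define $\bar U$ and to legitimize the $C^2$ Itô expansion on $(0,\infty)$) together with the verification that the compensator $\int_E(e^{\bar U}-\bar U-1)\nu$ is finite, so that the compensation step $\int(e^{\bar U}-1)\mu=\int(e^{\bar U}-1)\widetilde\mu+\int(e^{\bar U}-1)\nu$ is valid; since only $\exp(|\psi_T|)\in\mathbb{L}^1$ is available and no a priori $L^2$ bound holds, this must be justified by a localization argument showing that the predictable finite-variation part generated by Itô's formula is locally integrable.
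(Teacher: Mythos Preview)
The paper does not actually provide a proof of this proposition; it is stated without proof in the appendix as part of the background on quadratic exponential semimartingales (the immediately preceding proposition is explicitly attributed to \cite{ELKMN16}, and this one is of the same character). Your argument is the standard and correct one: represent the strictly positive uniformly integrable martingale $\bar N_t=\E[\exp(\psi_T)\mid\Fc_t]$ via the weak representation property, take its stochastic logarithm, and apply It\^o's formula to $\ln\bar N$ to read off the canonical decomposition. The identification $\bar M^c+(e^{\bar U}-1).\widetilde\mu=\int_0^\cdot \bar N_{s^-}^{-1}\,d\bar N_s$ makes the Dol\'eans--Dade exponential equal to $\bar N/\bar N_0$, which gives membership in ${\cal U}_{\exp}$ directly, and comparison with the preceding proposition on $r(\bar M)$ and $\underline r(\underline M)$ shows that $\bar\rho$ and $\underline\rho$ are canonical quadratic exponential semimartingales in the sense used there.

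Your closing caveats are well placed: the only genuine technical points are the strict positivity of $\bar N_{t^-}$ (needed to define $\bar U=\ln(1+\bar V/\bar N_{-})$ and to apply the $C^2$ It\^o formula) and the local integrability of $(e^{\bar U}-\bar U-1).\nu$ so that the passage from $\mu$ to $\widetilde\mu$ is legitimate. Both are handled by the localization you indicate, stopping at the first time $\bar N$ falls below $1/n$; on each stochastic interval the integrands are bounded and the compensation is valid. Nothing is missing from your outline.
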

 
\subsection{Integrability of the $Q(\Lambda,C)$-semimartingale}
In this part we want to investigate the integrability of this classs of semimartingale. This result will be extremely useful in the section $3$.\\
First, we consider the following decomposition of a $Q(\Lambda,C)$-semimartingale $Y$
\begin{equation}\label{transformation Y}
\bar X^{\Lambda,C}_t(|Y|):=e^{C_t}|Y_{t}|+\int_0^t e^{C_s}d\Lambda_s.
\end{equation} 
To explore to exponential integrability of this class of semimartingale , we proceed analogously to the proof of proposition  $(3.2)$ in \cite{BElK11}. Note that this decomposition appeared for the first time in the continuous setting in this paper.\\ For this propose, let us start by the definition of the $Q$-submartingale.
\begin{Definition}
A semimartingale $Y=Y_{0}-V+M$ is a $Q$-submartingale if $-V+\frac{1}{2}\langle M^{c}\rangle +\frac{1}{2}j_{t}(\Delta M^{d})$ is a predictable increasing process.
\end{Definition}
\begin{Theorem} let $\bar{X}$ be a c\`adl\`ag process given by \eqref{transformation Y} such that $\bar X^{\Lambda,C}_T(e^{|Y|})\in \mathbb{ L}^{1}$, 
the process $\bar X^{\Lambda,C}_T$ is an ${\cal Q}(\Lambda,C)$-semimartingale which belonging to  ${\cal D}_{exp}$ if and only if for any stopping times $\sigma\le \tau\le T$:
    \begin{equation}
      |Y_{\sigma}|\leq \rho_{\sigma}\left( e^{C_{\sigma,\tau}}|Y_{\tau}|+\int_{\sigma}^{\tau}e^{C_{\sigma,t}}d\Lambda_{t} \right).
    \end{equation}
\end{Theorem}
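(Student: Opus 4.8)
The renormalisation \eqref{transformation Y} is engineered so that the two "linear'' terms $d\Lambda_t$ and $|Y_t|\,dC_t$ in the structure condition $\mathcal{Q}(\Lambda,C)$ are absorbed. The natural object is therefore, for stopping times $\sigma\le\tau\le T$, the re-based process
$$
\widehat X^{\sigma}_t:=e^{C_{\sigma,t}}|Y_t|+\int_{\sigma}^{t}e^{C_{\sigma,s}}\,d\Lambda_s,\qquad t\in[\sigma,T],
$$
for which $\widehat X^{\sigma}_{\sigma}=|Y_{\sigma}|$ while $\widehat X^{\sigma}_{\tau}$ is exactly the random variable appearing inside $\rho_{\sigma}$ in the statement. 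Since $\rho_{\sigma}(\psi)=\ln\E[e^{\psi}\mid\mathcal{F}_{\sigma}]$, the asserted inequality is equivalent to $\exp(\widehat X^{\sigma}_{\sigma})\le\E[\exp(\widehat X^{\sigma}_{\tau})\mid\mathcal{F}_{\sigma}]$, i.e.\ to the submartingale property of $t\mapsto\exp(\widehat X^{\sigma}_t)$ on $[\sigma,\tau]$. So the whole statement reduces (in the spirit of Proposition 3.2 of \cite{BElK11}) to: $[\bar X\text{ carries the }\mathcal{Q}(\Lambda,C)\text{ structure and lies in }\mathcal{D}_{exp}]\iff[\exp(\widehat X^{\sigma})\text{ is a submartingale on }[\sigma,T]\text{ for every stopping time }\sigma]$. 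Throughout one may work with $|Y|$, which is again a $\mathcal{Q}(\Lambda,C)$-semimartingale (the It\^o--Tanaka local time only sharpens the lower bound on its finite-variation part).

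For the direct implication, assume $\bar X$ is a $\mathcal{Q}(\Lambda,C)$-semimartingale in $\mathcal{D}_{exp}$ and write $|Y|=|Y_0|-\bar V+\bar M^{c}+\bar U\ast\widetilde\mu$ with $d\bar V\ll\tfrac12 d\langle\bar M^{c}\rangle+d\Lambda+|Y|\,dC+j_t(\bar U_t)\,dt$. I would apply It\^o's formula to $D_t:=\exp(\widehat X^{\sigma}_t)$. Because $e^{C_{\sigma,t}}\ge1$ for $t\ge\sigma$, the terms $e^{C_{\sigma,t}}\,d\Lambda_t$ and $e^{C_{\sigma,t}}|Y_t|\,dC_t$ in the drift of $\widehat X^{\sigma}$ cancel exactly the corresponding terms coming from $-e^{C_{\sigma,t}}\,d\bar V_t$; the continuous It\^o correction leaves $\tfrac12\bigl(e^{2C_{\sigma,t}}-e^{C_{\sigma,t}}\bigr)d\langle\bar M^{c}\rangle_t\ge0$; and the compensated jump correction leaves $\bigl[j_t(e^{C_{\sigma,t}}\bar U_t)-e^{C_{\sigma,t}}j_t(\bar U_t)\bigr]dt$, which is $\ge0$ by the elementary convexity bound $\phi(cx)\ge c\,\phi(x)$ for $c\ge1$ applied pointwise with $\phi(x)=e^{x}-x-1$ and $c=e^{C_{\sigma,t}}$. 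Hence $D$ has nonnegative drift, so it is a local submartingale; the $\mathcal{D}_{exp}$ hypothesis supplies the uniform integrability needed to upgrade it to a genuine submartingale on $[\sigma,\tau]$, and optional sampling at $\sigma$ and $\tau$ yields $\exp(|Y_{\sigma}|)\le\E[\exp(\widehat X^{\sigma}_{\tau})\mid\mathcal{F}_{\sigma}]$, that is, the entropic bound.

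For the converse, assume the entropic inequality for all $\sigma\le\tau\le T$. Taking $\tau=T$ gives $\exp(|Y_{\sigma}|)\le\E[\exp(\widehat X^{\sigma}_{T})\mid\mathcal{F}_{\sigma}]$, and since $\widehat X^{\sigma}_{T}\le\bar X^{\Lambda,C}_{T}(|Y|)$ (using $e^{-C_{\sigma}}\le1$ and positivity), the integrability hypothesis $\exp(\bar X^{\Lambda,C}_{T}(|Y|))\in\mathbb{L}^{1}$ together with Doob's inequalities shows $\exp(\widehat X^{\sigma})$ is of class (D) on $[\sigma,T]$, i.e.\ $\bar X\in\mathcal{D}_{exp}$. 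It remains to recover the pointwise structure condition, which I would do by running the It\^o computation of the previous paragraph in reverse: applying the entropic inequality along a localising family $\tau_p\downarrow\sigma$ shows $\exp(\widehat X^{\sigma})$ is a local submartingale, hence its drift is $\ge0$; since the normalisation is trivial at $t=\sigma$ ($e^{C_{\sigma,\sigma}}=1$), expanding that drift isolates the one-sided estimate $d\bar V\ll\tfrac12 d\langle\bar M^{c}\rangle+d\Lambda+|Y|\,dC+j_t(\bar U_t)\,dt$, and the symmetric argument with $\underline\rho$ (the lower entropic bound built into $\mathcal{D}_{exp}$, equivalently applied to $-Y$) gives the matching lower bound, hence the full structure condition \eqref{structure}.

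I expect the converse to be the main obstacle: extracting the infinitesimal structure condition from the family of entropic inequalities indexed by stopping times requires a careful localisation and a differentiation of $\tau\mapsto\rho_{\sigma}(\widehat X^{\sigma}_{\tau})$ as $\tau\downarrow\sigma$, for which the representation of $\bar\rho$ and $\underline\rho$ as canonical quadratic exponential semimartingales (the Proposition above, giving $-d\bar\rho_t=-d\bar M_t+\tfrac12 d\langle\bar M^{c}\rangle_t+(e^{\bar U}-\bar U-1).\nu_t$) is the essential tool; one must also verify that the It\^o--Tanaka local time produced by passing from $Y$ to $|Y|$ does not obstruct the isolation of the two-sided inequality. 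The direct implication is, by contrast, a routine It\^o computation once the two elementary monotonicity facts $e^{C_{\sigma,t}}\le e^{2C_{\sigma,t}}$ and $\phi(cx)\ge c\phi(x)$ $(c\ge1)$ are in hand.
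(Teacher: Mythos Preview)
Your forward implication is essentially the paper's argument, only organised a little differently. The paper applies It\^o--Tanaka to $|Y|$, then computes the differential of the global process $\bar X^{\Lambda,C}_t=e^{C_t}|Y_t|+\int_0^t e^{C_s}\,d\Lambda_s$, and shows it is a $Q$-submartingale by the same two elementary inequalities you use (namely $e^{2C}\ge e^{C}$ for the continuous bracket and the convexity estimate $e^{C_t}j_t(\Delta M^{s,d})\le j_t(e^{C_t}\Delta M^{s,d})$ for the jump part); it then invokes ``$Q$-submartingale $\Rightarrow$ $\exp(\bar X)$ submartingale'' as a separate step, applies optional sampling, and only at the end divides through by $e^{C_\sigma}$ to pass from $\bar X^{\Lambda,C}$ to your re-based process $\widehat X^{\sigma}$. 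Your route of applying It\^o directly to $\exp(\widehat X^{\sigma})$ collapses those two steps into one, which is slightly cleaner but not materially different.

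The notable point is that the paper's proof treats \emph{only} the forward implication: it stops after deriving the entropic bound and does not attempt to recover the structure condition from the family of inequalities. So the converse you flag as ``the main obstacle'' is simply absent from the paper's argument. Your sketch for it (localise $\tau\downarrow\sigma$, read off nonnegativity of the drift of $\exp(\widehat X^{\sigma})$ at $t=\sigma$ where $e^{C_{\sigma,\sigma}}=1$, then run the symmetric argument with $\underline\rho$ on $-Y$) is the natural line and would, if carried out, go beyond what the paper actually proves; just be aware that to make it rigorous you must first know that $Y$ (hence $|Y|$) is a special semimartingale so that ``drift $\ge 0$'' is meaningful, and the It\^o--Tanaka local time term is harmless only for the upper bound on $d\bar V$, so the lower bound must genuinely come from the $-Y$ argument rather than from $|Y|$.
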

\begin{proof}
First we check that $\bar X^{\Lambda,C}_t(|Y|)$ is a $Q$-submartingale.
Applying It$\hat{o}$-Tanaka formula we get for all $t\in[0,T]$
\begin{equation}
d|Y|_{t}=sign(Y_{t_{-}})dM_{t}-sign(Y_{t_{-}})dV_{t}+(|Y_{t_{-}}+U_{t}|-|Y_{t_{-}}|)\star\mu_{t}+L_{t}^{Y}.
\end{equation}
where  $L^{Y}$ is a local time of $Y$ in zero. We denote by $M^{s}=sign(Y).dM$ and $V^{s}= sign(Y).dV$
\begin{align}
\vspace{0.5cm}
d\bar{X}^{\Lambda,C}_{t}&=e^{C_{t}}\left[ |X_{t}|dC_{t}-dV^{s}_{t}+dM^{s}_{t}+d(|Y_{t_{-}}+U_{t}|-|Y_{t_{-}}|)\star\mu+L_{t}^{Y}\right]\nonumber\\
\vspace{0.5cm}
&=e^{C_{t}}[|X_{t}|dC_{t}-dV^{s}_{t}+d(|Y_{t_{-}}+U_{t}|-|Y_{t_{-}}|)\nu_{t}+L_{t}^{Y}]+e^{C_{t}}[dM^{s}_{t}+d(|Y_{t_{-}}+U_{t}|-|Y_{t_{-}}|)\star\tilde{\mu}_{t}]\nonumber \\
\vspace{0.5cm}
&=e^{C_{t}}\left[dA_{t}-\frac{1}{2}d\langle M^{c}_{t}\rangle -j_{t}(\Delta M^{d}_{t})+d(|Y_{t_{-}}+U_{t}|-|Y_{t_{-}}|)\nu_{t}\right]+e^{C_{t}}\left[dM^{s}_{t}+d(|Y_{t_{-}}+U_{t}|-|Y_{t_{-}}|)\star\tilde{\mu}_{t}\right].\nonumber 
\end{align}
Thanks to structure condition of the semimartingale $Y$, the process $A$ is increasing. Notice that the martingale part of this last decomposition : $\bar{M}:=e^{C_{t}}dM^{s}_{t}+d(|Y_{t_{-}}+U_{t}|-|Y_{t_{-}}|)\star\tilde{\mu}_{t}$ have the following quadratic variation 
$d\langle \bar{M}\rangle_{t}=e^{2C_{t}}d\langle M^{c,s}\rangle_{t}+e^{2C_{t}}d\langle |Y_{t_{-}}+U_{t}|-|Y_{t_{-}}|)\star\tilde{\mu}_{t}\rangle$.
Adding and subtracting  $j_{t}(e^{C_{t}}\Delta M_{t}^{s,d})$ respectively $e^{2C_{t}}d\langle M^{s,c}\rangle$ to $\bar{X}$ yields to 
\begin{align*}
d\bar{X}^{\Lambda,C}_{t}&=d\tilde{A}_{t}-\frac{1}{2}d\langle e^{C_{t}}M^{s,c}_{t}\rangle -\frac{1}{2}j_{t}(e^{C_{t}}\Delta M^{s,d}_{t})+e^{C_{t}}d(|Y_{t_{-}}+U_{t}|-|Y_{t_{-}}|)\nu_{t}\\
&+e^{C_{t}}\left[dM^{s}_{t}+d(|Y_{t_{-}}+U_{t}|-|Y_{t_{-}}|)\star\tilde{\mu}_{t}\right].
\end{align*}
The process $\tilde{A}$ is increasing since $e^{C_{t}}j_{t}(\Delta M^{s,d}_{t})-j_{t}(e^{C_{t}}\Delta M^{s,d}_{t})\geq 0$. Furthermore $e^{C_{t}}d\langle M^{s,c}\rangle_{t}-e^{2C_{t}}d\langle M^{s,c}\rangle_{t}\geq 0$. 
Once again we add and subtract $ j_{t}(e^{C_{t}}\Delta M^{s,d})$ 
\begin{align*}
d\bar{X}^{\Lambda,C}_{t}&=d\bar{A}_{t}-\frac{1}{2}d\langle e^{C_{t}}M^{s,c}_{t}\rangle -\frac{1}{2}j_{t}(e^{C_{t}}\Delta M^{s,d}_{t})
+e^{C_{t}}\left[dM^{s}_{t}+d(|Y_{t_{-}}+U_{t}|-|Y_{t_{-}}|)\star\tilde{\mu}_{t}\right].
\end{align*}
The process $d\bar{A}=d\tilde{A}+d(j_{t}(e^{C_{t}}|Y_{t_{-}}+U_{t}|-|Y_{t_{-}}|)$ is increasing process.
This decomposition shows that $\bar{X}^{\Lambda,C}$ is a $Q$-submartingale.\\
Since  $\bar{X}^{\Lambda,C}$ is a $Q$-submartingale, it follows that $\exp(\bar{X}^{\Lambda,C})$ is a submartingale. Hence, for all stopping time $\sigma$, $\tau$ 
\begin{align*}
&\exp(\bar{X}_{\sigma}^{\Lambda,C})\leq \E\left[\exp(\bar{X}_{\tau}^{\Lambda,C})|\mathcal{F}_{\sigma}\right]\\
&\exp(e^{C_{\sigma}}|Y_{ \sigma}|+\int_{0}^{\sigma}e^{C_{\sigma,t}}d\Lambda_{t})\leq \E\left[\exp(e^{C_{\tau}}|Y_{ \tau}|+\int_{0}^{\tau}e^{C_{\tau,t}}d\Lambda_{t})|\mathcal{F}_{\sigma}\right].
\end{align*}
Taking $\int_{0}^{\sigma}e^{C_{\sigma,t}}d\Lambda_{t}$ in the right hand side we obtain 
\begin{align*}
&\exp(|Y_{ \sigma}|)\leq \E\left[\exp(e^{C_{\sigma,\tau}}|Y_{ \tau}|+\int_{\sigma}^{\tau}e^{C_{\tau,t}}d\Lambda_{t})\right].
\end{align*}
Hence we can write 
\begin{equation}
|Y_{ \sigma}|\leq \ln\E\left[\exp(e^{C_{\sigma,\tau}}|Y_{ \tau}|+\int_{\sigma}^{\tau}e^{C_{\tau,t}}d\Lambda_{t})\right].
\end{equation}
which ends the proof.
\end{proof}
\subsection{Quadratic variation and Stability result}
 \begin{Definition}\label{ClassQ}
 ${\cal S}_{Q}(|\xi|,\Lambda,C)$  is the class of all ${\cal Q}(\Lambda,C)$-semimartingales  $Y$ such that $$|Y_t|\le \bar\rho_t\left[e^{C_{t,T}}|\xi|+\int_t^T e^{C_{t,s}}d\Lambda_s\right], \quad a.s.$$
\end{Definition}
\begin{Proposition}\label{Stability} Let $(Y^{n})_{n}$ a sequence of ${\cal S}_Q(|\xi|,\Lambda,C)$ special semimartingales which canonical decomposition $Y^{n}=X^n_0-V^{n}+M^{n}$ which converge  in ${\cal H}^1$ to some process $Y$. Therefore the process $Y$ which canonical decomposition $Y=Y_0-V+M$ is an adapted c\`adl\`ag process which belongs to ${\cal S}_Q(|\xi|,\Lambda,C)$
such that:
\begin{equation*}
\label{limite:variation}
\lim_{ n \to \infty} \mathbb{E} \, \big[ \big(V^n - V\big)^* \big] = 0 \quad \mbox{and} \quad  \lim_{ n \to \infty} \|M^n - M\|_{\mathcal{H}^1} = 0.
\end{equation*}
\end{Proposition}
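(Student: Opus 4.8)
\textbf{Proof plan for Proposition \ref{Stability}.}

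The plan is to mimic the classical Barlow--Protter stability argument for semimartingales adapted to the setting of $\mathcal{S}_Q(|\xi|,\Lambda,C)$-semimartingales. The starting observation is that because each $Y^n$ lies in $\mathcal{S}_Q(|\xi|,\Lambda,C)$, the structure condition $\mathcal{Q}(\Lambda,C,\delta)$ gives a two-sided control of the finite variation part $V^n$ by $\langle (M^n)^c\rangle$, $j(\cdot)(\Delta (M^n)^d)$, $\Lambda$ and $|Y^n|\,dC$. Combined with the a priori bound $|Y^n_t|\le \bar\rho_t[e^{C_{t,T}}|\xi|+\int_t^T e^{C_{t,s}}d\Lambda_s]$ and the exponential integrability of $\xi$ and $\Lambda$ from Assumption \eqref{H1}, I would first derive the uniform estimates
\[
\sup_n \mathbb{E}\Big[\int_0^T |dV^n_s|\Big] \le C, \qquad \sup_n \mathbb{E}\big[(M^n)^\star\big]\le C,
\]
so that $(V^n)$ is bounded in total variation in $\mathbb{L}^1$ and $(M^n)$ is bounded in $\mathcal{H}^1$. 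This is exactly the hypothesis needed to invoke the Barlow--Protter theorem \cite{BP90}: a sequence of semimartingales converging in $\mathcal{H}^1$ (or even ucp) whose finite variation parts are bounded in $\mathbb{L}^1$-total variation has the property that the canonical decompositions converge, i.e. $\mathbb{E}[(V^n-V)^\star]\to 0$ and $\|M^n-M\|_{\mathcal{H}^1}\to 0$, where $Y=Y_0-V+M$ is the canonical decomposition of the limit. This already yields the displayed convergence; it remains to check that $Y$ itself belongs to $\mathcal{S}_Q(|\xi|,\Lambda,C)$.

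To get the structure condition for the limit, I would pass to the limit in the defining inequalities. From $V^n\to V$ with convergence of total variation and $M^n\to M$ in $\mathcal{H}^1$, one extracts a subsequence along which $\langle (M^n)^c\rangle\to\langle M^c\rangle$ and the jump measures of $M^n$ converge, so that $j_t(\pm\delta (M^n)^d)\to j_t(\pm\delta M^d)$ (using that $j$ is built from the fixed compensator $\nu$ and a continuity/lower-semicontinuity argument). Since $|Y^n_t|\to|Y_t|$ as well, the two-sided bound
\[
-\tfrac{\delta}{2}d\langle M^c\rangle_t - d\Lambda_t - |Y_t|dC_t - j_t(-\delta M^d_t)\ll dV_t \ll \tfrac{\delta}{2}d\langle M^c\rangle_t + d\Lambda_t + |Y_t|dC_t + j_t(\Delta M^d_t)_\delta
\]
is inherited in the limit (the set of increasing processes is closed under the relevant convergence). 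Finally, the bound $|Y_t|\le\bar\rho_t[e^{C_{t,T}}|\xi|+\int_t^T e^{C_{t,s}}d\Lambda_s]$ passes to the limit directly because $Y^n_t\to Y_t$ pointwise (a.s., along the subsequence) and the right-hand side is the same for every $n$; this places $Y$ in $\mathcal{S}_Q(|\xi|,\Lambda,C)$.

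The main obstacle is the stability of the martingale part, specifically showing that the jump terms $j_t(\pm\delta (M^n)^d)$ behave well under the $\mathcal{H}^1$-limit — this is precisely where the infinite activity of $\nu$ bites, since $j_t$ involves an integral $\int_E(e^{\pm\delta u}\mp\delta u-1)\nu(de)$ over a measure that is not finite. One must control this via the uniform exponential integrability coming from $\mathcal{S}_Q(|\xi|,\Lambda,C)$ (which ensures $j_t((M^n)^d)$ is a genuine, uniformly integrable increasing process, not merely locally finite) together with the $\mathcal{H}^1$ convergence of the purely discontinuous martingale parts $U^n\star\widetilde\mu\to U\star\widetilde\mu$. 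A careful application of Barlow--Protter together with the convexity of $u\mapsto e^{\pm\delta u}\mp\delta u-1$ and Fatou/dominated convergence against $\nu$ handles this; the continuous part is standard. I would also localize along stopping times where the exponential moments are bounded to make all of these manipulations rigorous, exactly as in the continuous case treated in \cite{BElK11}.
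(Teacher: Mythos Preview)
Your overall architecture is the same as the paper's: derive uniform $L^1$ bounds on the total variation of $V^n$ and on $(M^n)^\star$, then invoke Barlow--Protter \cite{BP90}, and finally pass the a priori bound $|Y^n_t|\le\bar\rho_t[\cdots]$ to the limit. The missing piece in your plan is \emph{how} to get those uniform estimates, and this is precisely the technical heart of the proof. You write that the structure condition controls $dV^n$ by $\langle (M^n)^c\rangle$, $j_t(\pm\delta(M^n)^d)$, $\Lambda$ and $|Y^n|\,dC$, and that the a priori bound on $|Y^n|$ together with exponential integrability of $\xi,\Lambda$ then yields $\sup_n\mathbb{E}[\int_0^T|dV^n_s|]\le C$. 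But this is circular: the a priori bound controls only the $|Y^n|\,dC$ and $\Lambda$ terms; it does \emph{not} directly bound $\langle(M^n)^c\rangle$ or $j_t(\pm\delta(M^n)^d)$, which are exactly the quantities you need in order to bound $V^n$.

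The paper closes this gap as follows. From the $\mathcal{Q}(\Lambda,C)$ structure one shows that the exponential transforms $\exp\big((X^n)^{\Lambda,C}_t(\pm Y)\big)$, with $(X^n)^{\Lambda,C}(Y)=Y^n+\Lambda+|Y^n|*C$, are local $\mathcal{Q}$-submartingales with Doléans--Dade decompositions $\exp(\pm Y^n_0)\,\mathcal{E}(\bar M^n/\underline M^n)\exp(\bar A^n/\underline A^n)$. Since the a priori bound from $\mathcal{S}_Q(|\xi|,\Lambda,C)$ makes these exponentials genuinely bounded in every $L^p$, the Garsia--Neveu lemma yields uniform $L^p$ control on $\langle \bar M^n\rangle_T$ and $\langle\underline M^n\rangle_T$. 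One then recovers the bound on $\langle(M^n)^c\rangle$ from this, and the jump term is handled via the elementary inequality
\[
2\big[(e^{\delta U}-\delta U-1)+(e^{-\delta U}+\delta U-1)\big]\le |e^{\delta U}-1|^2+|e^{-\delta U}-1|^2,
\]
which dominates $j(U)+j(-U)$ by the quadratic variation of the discontinuous martingales appearing in the exponential decomposition. This is what finally gives $\sup_n\mathbb{E}[\int_0^T|dV^n_s|]\le C$. Your plan would go through once you insert this machinery; the remaining steps (Barlow--Protter and passage to the limit in the $\bar\rho$-bound) are as you describe. Note also that the paper does not attempt to pass the full two-sided structure condition $\mathcal{Q}(\Lambda,C,\delta)$ to the limit as you propose; it only verifies the $\bar\rho$-bound for $Y$, so your third paragraph goes beyond what the paper actually proves.
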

\begin{Remark}
The proof is built on the stability theorem of Barlow and Protter \cite{BP90}, based on uniform estimates of the quadratic variation part and the total variation of the semimartingale.\\
In \cite{BElK11}, the authors work in a continuous framework where they proved  that the class $S_{\Q}(|\xi|,\Lambda,C )$ is stable by a.s convergence. In the proposition above we prove that this class is also stable in the discontinuous setting.
\end{Remark}
\begin{proof} 

From proposition $(3.3)$ in \cite{ELKMN16}, the following exponential transformation  $(X^{\kappa})^{\Lambda,C}(Y)=Y^{n}+\Lambda+|Y^{n}|*C$ and $(X^{n})^{\Lambda,C}(-Y)$ are ${\cal Q}$-local submartingale. 
Then by the Yoeurp-Meyer decomposition,
 there exists an increasing process $A_{t}$ such that 
\begin{equation}
\begin{split}
&\exp((X^{n})^{\Lambda,C}_t(Y))=\exp(Y_0){\cal E}(\bar{M}_{t}+(e^{\bar{U}}-1).\tilde{\mu})\exp(\bar A_t).\\
&\exp((X^{\kappa})^{\Lambda,C}_t(-Y))=\exp(-Y_0){\cal E}((\bar{M}_{t}+(e^{\bar{U}}-1).\tilde{\mu}))\exp(\underline{A}_t).
\end{split}
\nonumber
\end{equation}

Therefore , for a stopping times $\sigma\leq T$, we obtain \\
$$\langle \bar{M}\rangle_{\sigma}=\int_{\sigma}^{T}\frac{d\langle\exp((X_t^{n})^{\Lambda,C}(Y))\rangle}{\exp(2(X_t^{n})^{\Lambda,C}(Y))}.$$
 $$\langle\underline{M}\rangle_{\sigma}=\int_{\sigma}^{T}\frac{d\langle\exp((X_{t}^{n})^{\Lambda,C}(-Y))\rangle}{\exp(2(X_{t}^{\kappa})^{\Lambda,C}(-Y))}.$$

Thank's to Garsia Lemma  \eqref{Garsia neveu} and It\^o formula we have for all $p\geq 1$
\begin{equation}\label{KLp}
\begin{split}
\mathbb{E}\left[\langle\exp(p(X_t^{n})^{\Lambda,C}(Y))\rangle_{T}\right]\leq C_{2}\quad \hbox{ and } \quad  \mathbb{E}\left[\langle\exp(p(X_t^{n})^{\Lambda,C}(-Y))\rangle_{T}\right]\leq C_{1}.\quad 
\end{split}
\end{equation}
Then the estimates of $\langle \bar{M}\rangle$ and $\langle \bar{M}\rangle$ comes from Cauchy Schwartz inequality 
\begin{equation*}
\mathbb{E}\left[\langle\bar M^p_T\rangle\right]\leq C \quad \hbox{ and } \mathbb{E}\left[\langle\underline M\rangle^p_T\right]\leq C.
\end{equation*}
In the other hand , applying It\^o formula and using the fact that 
$$2[(e^{\delta U}-\delta U-1)+(e^{-\delta U}+\delta U-1)]\le |e^{\delta U}-1|^2+|e^{-\delta U}-1|^2.$$ leads to 
$$\mathbb{E}\left[\int_0^T|dV^n_s|\right]\le \mathbb{E}\left[\int_0^T {1\over 2}d\langle {(M^c)}^\kappa\rangle_s+\int_E[j(U^\kappa(s,x))+j(-U^\kappa(s,x))]\nu(ds,dx)\right]\le C^{'}.$$
Finally Applying the Barlow-Protter stability theorem \cite{BP90}, we obtain that the limit $Y$ of $Y^{n}$ is in fact a special semimartingale with the canonical decomposition $Y:=Y_{0}-V+M$
 satisfying:
\begin{equation}
\label{estimate:variation}
\mathbb{E} \Big[\int_0^T |dV_s| \, \Big] \leq C, \quad \mbox{and} \quad \mathbb{E} \, \big[ \big(M \big)^* \big] \leq C.
\end{equation}
and we have
\begin{equation}
\label{limite:variation}
\lim_{ n \to \infty} \mathbb{E} \, \big[ \big(V^n - V\big)^* \big] = 0 \quad \mbox{and} \quad  \lim_{n \to \infty} \|M^n - M\|_{\mathcal{H}^1} = 0.
\end{equation} 
Moreover, the semimartingale $X$ is also a $S_{\Q}(|\xi|,\Lambda,C )$ since a.s
\begin{equation}
|Y^n_\sigma|\le \bar\rho_\sigma\left[e^{C_{\sigma,T}}|\eta_T|+\int_\sigma^T e^{C_{\sigma,s}}d\Lambda_s\right] .
\end{equation}
Passing to the limit when $n$ goes to $\infty$.  We finally obtain 

\begin{equation}
|X_\sigma|\le \bar\rho_\sigma\left[e^{C_{\sigma,T}}|\xi|+\int_\sigma^T e^{C_{\sigma,s}}d\Lambda_s\right] .
\end{equation}
\end{proof}
In order to prove the existence of the solution we need the following lemma. Here we justify the existence of  solution of the BSDE associated to $(\bar{q}^{n,\kappa},|\xi|)$ and $(\underline{q}^{m,\kappa},-|\xi|)$.
\begin{Lemma}\cite{ELKMN16}
We consider the following approximation 
\begin{equation}
\bar{q}^{n,\kappa}=:=\bar{q}\wedge b(y,z,u)=\displaystyle{\inf_{r,w,v}}\left\lbrace \bar{q}^{\kappa}(r,w,v)+n|y-r|+n|z-w|+n|u-v|_{\nu}\right\rbrace .
\end{equation}
\begin{equation}
\underline{q}^{m,\kappa}:=\bar{q}\vee b(y,z,u)=\displaystyle{\sup_{r,w,v}}\left\lbrace \underline{q}^{\kappa}(r,w,v)+m|y-r|+m|z-w|+m|u-v|_{\nu}\right\rbrace .
\end{equation}
where $\bar{q}^{\kappa}(r,w,v)=|l|_{t}+c_{t}|r|+\frac{1}{2}|w|^{2}+\frac{1}{\delta}j(\delta u)$ and $\underline{q}^{\kappa}(r,w,v)=-|l|_{t}-c_{t}|r|-\frac{1}{2}|w|^{2}-\frac{1}{\delta}j(-\delta u).$\\
We have \\
$\bullet$ The coefficient $(\bar{q}^{n,\kappa})_{n,\kappa}$ (resp.$\underline{q}^{n,\kappa}$ ) satisfy the structure condition in Assumptions \eqref{H1} and  converges monotonically to $\bar{q}$ in $(n,\kappa)$. \\
Moreover,  there exists respectively a unique solution $(\mathcal{\bar{Y}}^{n,\kappa},\mathcal{\bar{Z}}^{n,\kappa},\mathcal{\bar{U}}^{n,\kappa})$,$(\mathcal{\underline{Y}}^{m,\kappa}, \mathcal{\underline{Z}}^{m,\kappa},\mathcal{\underline{U}}^{m,\kappa})$ for the BSDEJ associated respectively to  $(\bar{q}^{n,\kappa},\xi)$ and $(\underline{q}^{m,\kappa},-\xi)$ satisfying \\
\begin{equation}\label{P}
|\mathcal{\bar{Y}}^{n,\kappa}_{t}|\leq \bar{\rho}_{t}\left[ e^{C_{t,T}}|\xi|+\int_{t}^{T}e^{C_{t,s}}d\Lambda_{s}\right] \hspace{2cm } \forall t\in[0,T], \P-a.s
\end{equation}
\end{Lemma}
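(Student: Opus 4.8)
The plan is to verify, in order, four things: that the approximating coefficients satisfy the structure condition of Assumption~\eqref{H1} and converge monotonically; that they are globally Lipschitz in $(y,z)$, locally Lipschitz in $u$, and satisfy the $A_\gamma$-condition; that the associated BSDEJ is well posed; and finally the a priori estimate~\eqref{P}.

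\textbf{Structure condition and monotonicity.} First I would record the two sandwiching inequalities. Taking $(r,w,v)=(y,z,u)$ in the infimum gives $\bar q^{n,\kappa}\le\bar q^{\kappa}$; and since $e^{\delta s}-\delta s-1\ge 0$ while $\nu^{\kappa}\le\nu$, we get $j^{\kappa}\le j$ and hence $\bar q^{\kappa}\le\bar q$, so $\bar q^{n,\kappa}\le\bar q$. For the lower bound, $\bar q^{\kappa}\ge 0$ pointwise and the penalty $n(|y-r|+|z-w|+|u-v|_{\nu})$ is nonnegative, so $\bar q^{n,\kappa}\ge 0\ge\underline q$; with $d\Lambda_t=l_t\,dt$ and $dC_t=c_t\,dt$ this is precisely the structure condition of~\eqref{H1}. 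Next I would argue monotonicity: in $n$ it is the standard inf-convolution fact (a larger penalty parameter increases the value), and in $\kappa$ it comes from the fact that enlarging $\kappa$ enlarges the support of $\nu^{\kappa}$ against a nonnegative integrand. Since $\bar q^{\kappa}$ is lower semicontinuous, the classical inf-convolution argument gives $\bar q^{n,\kappa}\nearrow\bar q^{\kappa}$ as $n\to\infty$, and monotone convergence gives $\bar q^{\kappa}\nearrow\bar q$ as $\kappa\to\infty$; hence $\bar q^{n,\kappa}\nearrow\bar q$, locally uniformly where $\bar q$ is finite. The dual sup-convolution argument treats $\underline q^{m,\kappa}$ and gives $\underline q\le\underline q^{m,\kappa}\le 0\le\bar q$ together with the monotone convergence to $\underline q$.

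\textbf{Regularity, $A_\gamma$-condition and well-posedness.} As an inf-convolution with the kernel $n(|y-r|+|z-w|+|u-v|_{\nu})$, $\bar q^{n,\kappa}$ is globally $n$-Lipschitz in $(y,z)$ and $n$-Lipschitz in $u$ for the distance $|\cdot|_{\nu}$, in particular locally Lipschitz in $u$, and $\bar q^{n,\kappa}_t(0,0,0)$ is bounded on $[0,T]$ since $l$ and $c$ are continuous. For the $A_\gamma$-condition I would use convexity of $s\mapsto e^{\delta s}-\delta s$: this gives $\bar q^{\kappa}(y,z,u)-\bar q^{\kappa}(y,z,\bar u)=\tfrac{1}{\delta}\int_E\big[(e^{\delta u(e)}-\delta u(e))-(e^{\delta\bar u(e)}-\delta\bar u(e))\big]\,\nu^{\kappa}(de)\le\int_E(e^{\delta u(e)}-1)(u(e)-\bar u(e))\,\nu^{\kappa}(de)$, so $\gamma=e^{\delta u}-1>-1$ works, and the inf-convolution preserves this one-sided bound as in~\cite{ELKMN16}; hence $\bar q^{n,\kappa}$ satisfies Assumption~\eqref{H2}. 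For bounded terminal data, existence and uniqueness of $(\bar{\mathcal Y}^{n,\kappa},\bar{\mathcal Z}^{n,\kappa},\bar{\mathcal U}^{n,\kappa})$ is then Becherer's theorem~\cite{B14}. To reach $\xi\in\mathbb{L}^{exp}_T$ I would truncate $\xi_k:=(-k)\vee\xi\wedge k$, solve $(\bar q^{n,\kappa},\xi_k)$, and note that, since $\bar q^{n,\kappa}$ obeys the structure condition, each solution $\bar{\mathcal Y}^{n,\kappa,k}$ is a $\mathcal Q(\Lambda,C,\delta)$-semimartingale, hence lies in $\mathcal S_Q(|\xi_k|,\Lambda,C)$ and is dominated, uniformly in $k$, by the $\P$-a.s.\ finite process $\bar\rho_\cdot\big[e^{C_{\cdot,T}}|\xi|+\int_\cdot^T e^{C_{\cdot,s}}d\Lambda_s\big]$ (this is where the exponential-moment part of~\eqref{H1} enters). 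By the comparison theorem (available because $A_\gamma$ holds) the family is monotone in $k$, so it converges --- uniformly in $t$ by Dini, and strongly for the martingale part by the stability Proposition~\ref{Stability} --- to a solution; uniqueness follows from the Lipschitz property via a standard It\^o--Gronwall estimate.

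\textbf{A priori estimate, and the main obstacle.} Since $\bar q^{n,\kappa}$ satisfies the structure condition, the solution $\bar{\mathcal Y}^{n,\kappa}=\bar{\mathcal Y}^{n,\kappa}_0-V^{n,\kappa}+M^{n,\kappa}$, with $dV^{n,\kappa}_t=\bar q^{n,\kappa}_t(\bar{\mathcal Y}^{n,\kappa}_t,\bar{\mathcal Z}^{n,\kappa}_t,\bar{\mathcal U}^{n,\kappa}_t)\,dt$, is a $\mathcal Q(\Lambda,C,\delta)$-semimartingale; and as $\bar X^{\Lambda,C}_T(e^{|\xi|})\in\mathbb{L}^1$ by~\eqref{H1}, the integrability theorem for $Q(\Lambda,C)$-semimartingales recalled above applies and yields $|\bar{\mathcal Y}^{n,\kappa}_t|\le\bar\rho_t\big[e^{C_{t,T}}|\xi|+\int_t^T e^{C_{t,s}}d\Lambda_s\big]$ for all $t$, $\P$-a.s., which is~\eqref{P}; the symmetric argument with $(-\xi,\underline q^{m,\kappa})$ gives the corresponding bound for $\underline{\mathcal Y}^{m,\kappa}$. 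The hard part, I expect, is the passage from a bounded to an unbounded terminal condition: one has to make sure the truncated solutions stay in $\mathcal S_Q(|\xi|,\Lambda,C)$ \emph{uniformly} in $k$ and that the limiting triple genuinely lies in $S^2\times\mathbb{H}^2\times\mathbb{H}^2_\nu$, and this is exactly where the stability Proposition~\ref{Stability} and the exponential-moment hypothesis on $\xi$ are indispensable. A minor technical caveat is that $j^{\kappa}(\delta v)$ can be $+\infty$ for some $v$, so the infimum defining $\bar q^{n,\kappa}$ should be read over the $v\in\Q$ where it is finite; this changes none of the properties above but deserves a line of justification.
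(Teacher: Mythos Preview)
The paper does not give its own proof of this lemma; it is stated as a citation from \cite{ELKMN16}. Your argument is correct and uses precisely the ingredients the paper assembles elsewhere: the structure-condition sandwich and monotone convergence are handled exactly as in the proof of Lemma~\ref{regularisation}; the $A_\gamma$-verification via convexity of $s\mapsto e^{\delta s}-\delta s-1$ is what the paper itself invokes (again deferring to \cite{ELKMN16}) in Step~1 of the main theorem; and the bound~\eqref{P} is nothing but the integrability theorem for $\mathcal Q(\Lambda,C)$-semimartingales in the appendix, applied to a solution whose finite-variation part obeys the structure condition.

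One simplification is available relative to your route through truncation of $\xi$ and Becherer's bounded-terminal theorem. Because the inf-convolution penalty is $n|u-v|_\nu$, the standard one-line inf-convolution argument gives that $\bar q^{n,\kappa}$ is \emph{globally} $n$-Lipschitz in the $|\cdot|_\nu$-norm (not merely locally, as you state), and since $\xi\in\mathbb L^{\exp}_T\subset L^2(\mathcal F_T)$ while $\bar q^{n,\kappa}_t(0,0,0)\le l_t$ is bounded, the classical Lipschitz well-posedness result of \cite{TangandLi} or \cite{BPP97} applies directly and yields existence and uniqueness in $\mathcal S^2\times\mathbb H^2\times\mathbb H^2_\nu$ without any truncation of the terminal data or appeal to Proposition~\ref{Stability}. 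The stability machinery is only genuinely needed for the quadratic driver in the main theorem, not here.
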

\begin{Lemma}(Garsia-Neveu)\label{Garsia neveu}\\
Let $A$ be a predictable c\`adl\`ag increasing process and $\Phi$ a random variable, positive integrable. For any stopping times $\sigma\leq T$, we have $\E\left[A_{T}-A_{\sigma}|\mathcal{F}_{\sigma}\right]\leq \E\left[ U.1_{\sigma < T}|\mathcal{F}_{\sigma}\right]$. \\Then for all $p\geq 1$, $$ \E[A_{T}^{p}]\leq p^{p}\E[U^{p}].$$
\end{Lemma}
\bibliographystyle{plain}
\bibliography{bibfileEQBSDEJ}
\end{document}